\newtheorem{thm}{Theorem}
\newtheorem{lem}{Lemma}
\newtheorem{prob}{Problem}
\newtheorem{conj}{Conjecture}
\newtheorem{cor}{Corollary}
\newtheorem{rem}{Remark}
\newtheorem{prop}{Proposition}
\begin{document}

\title{On a conjecture of Erd\H{o}s about sets without $k$ pairwise coprime
integers}
\author{S\'andor Z. Kiss \thanks{Institute of Mathematics, Budapest
University of Technology and Economics, H-1529 B.O. Box, Hungary;
kisspest@cs.elte.hu;
This author was supported by the National Research, Development and Innovation Office NKFIH Grant No. K115288.}, Csaba
S\'andor \thanks{Institute of Mathematics, Budapest University of
Technology and Economics, H-1529 B.O. Box, Hungary, csandor@math.bme.hu.
This author was supported by the OTKA Grant No. K109789. This paper was supported
by the J\'anos Bolyai Research Scholarship of the Hungarian Academy of Sciences.},
Quan-Hui Yang \thanks{School of Mathematics and Statistics, Nanjing University of Information Science and Technology, Nanjing 210044, China; yangquanhui01@163.com; This author was supported by the National Natural Science
Foundation for Youth of China, Grant No. 11501299, the Natural
Science Foundation of Jiangsu Province, Grant Nos.
BK20150889,~15KJB110014 and the Startup Foundation for Introducing
Talent of NUIST, Grant No. 2014r029.}
}
\date{}
\maketitle

\begin{abstract}
\noindent Let $\mathbb{Z}^{+}$ be the set of positive integers.
Let $C_{k}$ denote all subsets of $\mathbb{Z}^{+}$ such that
neither of them contains $k + 1$ pairwise coprime integers and
$C_k(n)=C_k\cap \{1,2,\ldots,n\}$. Let $f(n, k) = \text{max}_{A
\in C_{k}(n)}|A|$, where $|A|$ denotes the number of elements of
the set $A$. Let $E_k(n)$ be the set of positive integers not
exceeding $n$ which are divisible by at least one of the primes
$p_{1}, \dots{}, p_{k}$, where  $p_{i}$ denote the $i$th prime
number. In 1962, Erd\H{o}s conjectured that $f(n, k) = |E(n,k)|$
for every $n \ge p_{k}$. Recently Chen and Zhou proved some
results about this conjecture. In this paper we solve an open
problem of Chen and Zhou and prove several related results about
the conjecture.

 {\it
2010 Mathematics Subject Classification:} Primary 11B75.

{\it Keywords and phrases:}  extremal sets, pairwise coprime
integers, Erd\H{o}s' conjecture
\end{abstract}


\section{Introduction}

Let $\mathbb{Z}^{+}$ be the set of positive integers and $p_{i}$
denote the $i$th prime number. For a set $A\subseteq
\mathbb{Z}^{+}$, we define $A(n)=A\cap \{1,2,\ldots,n\}$. Let
$C_{k}$ denote all subsets of $\mathbb{Z}^{+}$ such that neither
of them contains $k + 1$ pairwise coprime integers. Let $f(n, k) =
\text{max}_{A \in C_{k}(n)}|A|$, where $|A|$ denotes the number of
elements of $A$. Let $E_{k}$ be the set of positive integers which
are divisible by at least one of the primes $p_{1},p_2, \ldots{},
p_{k}$. Clearly $E_k(n)\in C_k(n)$. Hence $f(n,k) \ge |E_k(n)|$
for all integers $n,k$ and $f(n, k) = n = |E_k(n)| + 1$ if $n <
p_{k}$. In 1962, Erd\H{o}s \cite{Erd}, \cite{Erc} conjectured that
$f(n, k) = |E_k(n)|$ for every $n \ge p_{k}$. It can be proved
easily that this conjecture holds for $k = 1$ and $k = 2$. For $k
= 3$, the conjecture was proved by Choi \cite{Choi} and
independently by Szab\'o and T\'oth \cite{Cst}. Choi \cite{Choi}
also showed that for $n \ge 150$, if $A \in C_{3}(n)$ and $|A| =
|E_3(n)|$, then $A = E_3(n)$. Later, M\'ocsy \cite{Moc} proved
that the Erd\H{o}s' conjecture holds for $k = 4$ and Ahlswede and
Khachatrian \cite{Ahk} disproved the conjecture for $k = 212$. In
\cite{des} Chen and Zhou gave a new proof for $k = 4$ and
disproved the conjecture for $k = 211$. Beyond these, they also
proved some related results and posed the following problem (see
\cite[Problem 3]{des}).

{\noindent{\bf Chen and Zhou's problem.}\label{prob1} Is
$\limsup_{k \rightarrow \infty}\text{sup}_{n \ge 1}(f(n, k) -
|E_k(n)|) < +\infty$?}

In this paper, we answer this problem.

\begin{thm}
$\limsup_{k \rightarrow \infty} \sup_{n \ge 1}(f(n, k) - |E_k(n)|)
= +\infty$.
\end{thm}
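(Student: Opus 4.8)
The plan is to refute boundedness by producing, for every $r\ge 1$, a pair $(n,k)$ with $k$ arbitrarily large and $f(n,k)-|E_k(n)|\ge r$. I would look for extremal sets of the form
\[
A \;=\; E_\ell(n)\ \cup\ \bigl\{\, m\le n:\ m \text{ has no prime factor } \le p_\ell \text{ and has at least two distinct prime factors in } (p_\ell,y]\,\bigr\},
\]
where $\ell$, $y$ and $n$ all tend to infinity, coupled in a carefully chosen way (the point being, as the calculation below will force, that $y$ must be comparable to a fixed root of $n$, not of size $n^{o(1)}$). Intuitively one is sacrificing the single multiples of the primes in a block and trading them for integers that "use up" those primes two at a time, which is exactly the phenomenon behind the failure of Erd\H{o}s' conjecture.

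For membership, suppose $c_1,\dots,c_t\in A$ are pairwise coprime. Each $c_i$ either is divisible by some $p_j$ with $j\le\ell$ — and since the $c_i$ are pairwise coprime these primes are used by distinct $c_i$'s, so there are at most $\ell$ such — or else has no prime factor $\le p_\ell$ and at least two distinct prime factors in $(p_\ell,y]$; the latter $c_i$'s use pairwise disjoint $2$-subsets of the $\pi(y)-\ell$ primes of $(p_\ell,y]$, so there are at most $\tfrac12(\pi(y)-\ell)$ of them. Hence $A\in C_k(n)$ with $k=\bigl\lfloor\tfrac12(\pi(y)+\ell)\bigr\rfloor$. The heart of the proof is then the estimate of $|A|-|E_k(n)|$. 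Writing it as $|A\setminus E_k(n)|-|E_k(n)\setminus A|$, one checks that $A\setminus E_k(n)$ is exactly the set of $m\le n$ having no prime factor $\le p_k$ and at least two distinct prime factors in $(p_k,y]$, while $E_k(n)\setminus A$ is exactly the set of $m\le n$ having no prime factor $\le p_\ell$, exactly one distinct prime factor in $(p_\ell,y]$, and that prime lying in $(p_\ell,p_k]$. Both counts I would attack by sieve methods — Mertens' theorem, and Buchstab's function $\omega$ once $y$ is a fixed power of $n$ — after which the whole question becomes the optimisation of an explicit function of the parameters $\ell$, $y$ and $\log n/\log y$.

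The main obstacle is precisely that this optimisation is delicate and the obvious regime fails: if one takes $y=n^{o(1)}$, Mertens' theorem gives $|A\setminus E_k(n)|\sim e^{-\gamma}n\bigl(\tfrac1{\log p_k}-\tfrac1{\log y}-\tfrac1{\log y}\log\tfrac{\log y}{\log p_k}\bigr)$ and $|E_k(n)\setminus A|\sim e^{-\gamma}\tfrac{n}{\log y}\log\tfrac{\log p_k}{\log p_\ell}$, and one finds $|A|-|E_k(n)|<0$ — the single multiples of the primes in $(p_\ell,p_k]$ that are forgone far outweigh the two‑prime‑factor integers gained. So the crux is to move $n$ \emph{out of} the Mertens range (with $y$ a genuine root of $n$, so Buchstab corrections to $\Phi(n,y)$, $\Phi(n,p_k)$ are $O(1)$ factors that tilt the balance) and to tune $\ell$ so that the net surplus, though only of order $n/(\log n)^{C}$, is positive and grows with $k$. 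A complementary route I would pursue in parallel is to bootstrap from the known failure at $k=211$: given $A^{*}\in C_{211}(n^{*})$ with $|A^{*}|\ge|E_{211}(n^{*})|+1$, build from it a $C_{k}$‑set beating $E_k$ by $r$ for suitable larger $k$. The difficulty there is that the naive amplification — a disjoint‑prime‑block tensor product of $r$ shifted copies — destroys density (the product set is far sparser than $E_k(n)$), so the amplification has to be performed \emph{inside} the $p_\ell$‑rough part of one fixed interval rather than by multiplying factorisations, which brings one back to analysing a set of essentially the shape displayed above.
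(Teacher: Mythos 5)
You have correctly identified the right \emph{family} of extremal sets (give up the single multiples of a block of primes in exchange for integers using two window primes at once), and your membership argument is fine. The gap is in the analytic regime you propose, and it is fatal: moving to ``$y$ a genuine root of $n$'' does not tilt the balance, it loses by an even larger margin than the Mertens range. Concretely, for each prime $p\in(p_\ell,p_k]$ the set $E_k(n)\setminus A$ contains every $m=p^as\le n$ with $s$ free of primes $\le y$; as soon as $n\ge Cy^{2}$ this already includes all $pq$ with $q$ prime in $(y,n/p]$, so the loss per forgone prime is $\gg \frac{n}{y\log y}$. On the other side, each pair $p,q\in(p_k,y]$ gains only about $\Phi(n/(pq),p_k)\ll 1+\frac{n}{y^{2}\log y}$ integers, and there are at most $\pi(y)$ primes from which to form pairs. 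Hence the ratio of total gain to total loss is $\ll \binom{\pi(y)}{2}\cdot\frac{n}{y^{2}\log y}\big/\big(L\cdot\frac{n}{y\log y}\big)\ll \pi(y)/y\ll 1/\log y$, uniformly in how you tune $\ell$ (equivalently $L=k-\ell$) and $u=\log n/\log y$; Buchstab's $\omega$ is trapped between absolute constants, so no such correction can reverse a deficit of order $\log y$. Your surplus is provably negative throughout the regime you single out as ``the crux,'' so the optimisation you defer to has no admissible solution there.

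What actually makes the construction win is confining $n$ to a razor-thin range: $n$ at least the square of the largest window prime (so that \emph{all} pair products are available) but smaller than (smallest window prime)$\times$(first prime beyond the window) (so that the loss per forgone prime is exactly the prime and its square, i.e.\ $2$ integers, not $\gg n/(y\log y)$ of them). For a window $\{p_t,\dots,p_{t+2l-1}\}$ this demands $p_tp_{t+2l}>p_{t+2l-1}^{2}$, i.e.\ the gap just beyond the window must exceed roughly the sum of all $2l-1$ gaps inside it -- which for a typical $t$ is false, so the needed $n$ usually does not exist. This is the key number-theoretic input your proposal lacks: the paper's Lemma 1, which derives from Pintz's theorem $\limsup_t d_{t+2l-1}/\max\{d_t,\dots,d_{t+2l-2}\}=+\infty$ that such windows exist for every $l$. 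Once that is in hand the rest is exact, finite counting: $E_{t+l-1}(n)=B\cup C\cup D$ with $|D|=2l$ (the primes $p_{t+i}$ and squares $p_{t+i}^{2}$, $0\le i\le l-1$), and swapping $D$ for the $\binom{l}{2}$ remaining pair products $D'$ keeps the set in $C_{t+l-1}(n)$ and yields a surplus $\frac{l(l-5)}{2}\to\infty$. Your fallback idea of amplifying the $k=211$ counterexample is, as you yourself note, not carried out, so it does not close the gap either.
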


Theorem 1 implies that the conjecture of Erd\H{o}s is false for
infinitely many $k$, which also solves Problem 2 in \cite{des}.


On the other hand, motivated by the method of Pintz \cite{Ptc}
about the gaps between primes, we pose the following problem.

\begin{prob}
Find a suitable function $g(k) \rightarrow \infty$ such that
\[
\limsup_{k \rightarrow \infty}\frac{{\rm{sup}}_{n \ge 1}(f(n, k) -
|E_k(n)|)}{g(k)} > 0.
\]
\end{prob}

We also pose the following natural problem.
\begin{prob}
Is it true that ${\rm{sup}}_{n \ge 1}(f(n, k) - |E_k(n)|) <
+\infty$ holds for every positive integer $k$?
\end{prob}

For an infinite positive integers set $A$, we define
$$\bar{d}(A)=\limsup_{n\rightarrow +\infty}\frac{|A(n)|}{n},\quad \underline{d}(A)=\liminf_{n\rightarrow
+\infty}\frac{|A(n)|}{n}.$$ If $\bar{d}(A)=\underline{d}(A)$, then
we define $d(A):=\bar{d}(A)=\underline{d}(A)$.

Now we have the following conjecture.

\begin{conj}
If $A \in C_{k}$ and $A \not\subseteq E_{k}$, then $\bar{d}(A) <
d(E_{k}) = 1 - \frac{\varphi(p_{1}p_2\cdots p_{k})}{p_{1}p_2
\cdots p_{k}}$.
\end{conj}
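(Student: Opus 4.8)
The plan is to prove the conjecture in two stages: first the non-strict bound $\bar d(A)\le d(E_k)$ for every $A\in C_k$, and then upgrade it to the strict inequality. Write $P_k=\{p_1,\dots,p_k\}$. Two reductions come for free. If $A$ is finite then $\bar d(A)=0<d(E_k)$, so assume $A$ infinite; and if $1\in A$ then, since $1$ is coprime to everything, $A\setminus\{1\}\in C_{k-1}$, whence $\bar d(A)\le d(E_{k-1})<d(E_k)$ by induction on $k$. So assume $1\notin A$ and fix $a_0\in A\setminus E_k$; then $a_0>1$ and every prime factor of $a_0$ is $\ge p_{k+1}$.

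\medskip
\noindent\textbf{Stage 1: the bound $\bar d(A)\le d(E_k)$.} The $y$-rough elements of $A$ have density at most $\prod_{p\le y}(1-1/p)\to 0$ by Mertens, so up to an arbitrarily small change we may assume $A$ is supported on $\bigcup_{p\le y}p\mathbb Z$. Let $P=\{p:p\le y\}\supseteq P_k$. For $Q\subseteq P$ let $U_Q=\{n:\{p\in P:p\mid n\}=Q\}$, an ``atom'' of density $w_P(Q)=\prod_{p\in Q}\frac1p\prod_{p\in P\setminus Q}(1-\frac1p)$; these densities sum to $1$, and since $P$ is finite $\bar d(A)\le\sum_Q\bar d(A\cap U_Q)$. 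The combinatorial core is: if $Q_1,\dots,Q_{k+1}$ are pairwise disjoint and each $A\cap U_{Q_i}$ has positive upper density, then $A$ contains $k+1$ pairwise coprime integers --- one picks $a_i\in A\cap U_{Q_i}$ one at a time, as large as desired and with only a bounded excess of prime factors (atoms of positive density contain such integers of every size), and a counting argument forces each new $a_i$ to avoid the finitely many primes already used. Hence the family $\mathcal G=\{Q:\bar d(A\cap U_Q)>\varepsilon\}$ has no $k+1$ pairwise disjoint members and $\bar d(A)\le\sum_{Q\in\mathcal G}w_P(Q)+\varepsilon$. It then remains to establish the weighted extremal inequality
\[
\max\Bigl\{\textstyle\sum_{Q\in\mathcal G}w_P(Q):\mathcal G\subseteq 2^P,\ \mathcal G\text{ has no }k+1\text{ pairwise disjoint members}\Bigr\}=d(E_k),
\]
attained by the ``star'' $\mathcal G_0=\{Q:Q\cap P_k\ne\emptyset\}$; this should come from a shifting/compression argument pushing $\mathcal G$ towards systems supported on a fixed $k$-element ground set, in the spirit of Kleitman-type and Ahlswede--Khachatrian-type theorems. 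Letting $\varepsilon\to0$, $y\to\infty$ gives $\bar d(A)\le d(E_k)$.

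\medskip
\noindent\textbf{Stage 2: strictness.} Let $q_1<\dots<q_r$ be the prime divisors of $a_0$ (all $\ge p_{k+1}$) and put $U=q_1\mathbb Z\cup\dots\cup q_r\mathbb Z$. Since $a_0$ is coprime to every element of $A\setminus U$, the set $A\setminus U=\{a\in A:\gcd(a,a_0)=1\}$ has no $k$ pairwise coprime integers, i.e.\ $A\setminus U\in C_{k-1}$. By induction $\bar d(A\setminus U)\le d(E_{k-1})$, and a \emph{stability} refinement of Stage 1 should give more: a $C_{k-1}$-set forced to avoid all multiples of $q_1,\dots,q_r$ loses a definite amount of density --- heuristically $\bar d(A\setminus U)\lesssim d(E_{k-1})\prod_{j}(1-1/q_j)$ --- because the near-extremal $C_{k-1}$-sets are essentially $E_{k-1}$, which is \emph{not} contained in $\overline U$. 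One then combines this with a sharp bound on $A\cap U$, itself a $C_k$-set contained in $U$ that interacts with $A\setminus U$ through coprimality (for instance, if $A\cap U$ already contains $t$ pairwise coprime integers then $A\setminus U\in C_{k-t}$), to force $\bar d(A)<d(E_k)$.

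\medskip
\noindent\textbf{The main obstacle.} This last combination is the heart of the matter, and I expect essentially all the difficulty to lie here rather than in Stage 1. A naive split $\bar d(A)\le\bar d(A\setminus U)+\bar d(A\cap U)\le d(E_{k-1})+\bar d(U)$ already overshoots $d(E_k)$ for small $k$, and even the refined split using the stability bounds above is razor-thin and genuinely breaks down when $a_0$ has several prime factors only slightly larger than $p_k$. The true content of the conjecture is a \emph{conservation law}: whatever upper density $A$ gains by placing mass on atoms $U_Q$ with $Q\cap P_k=\emptyset$ must be at least cancelled by mass it is thereby forced to omit from $E_k$, and this cancellation is invisible once $A$ is broken into two pieces bounded independently. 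Making it rigorous seems to require running the extremal--set-theory analysis of Stage 1 for the \emph{joint} structure of $A$ relative to $P_k$ and to the primes of $a_0$ at once, quantifying exactly the deficiency produced whenever such an atom receives positive mass --- an Ahlswede--Khachatrian-style argument transplanted to the density setting.
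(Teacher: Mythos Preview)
The statement you are attacking is \emph{Conjecture~1} in the paper, and it remains open there for general $k$: the paper establishes it only for $k=1,2,3$ (Corollary~2), as an immediate consequence of the quantitative Propositions~1--3. Those propositions fix an element $a\in A\setminus E_k$ and analyse $A$ in arithmetic blocks of length $2a$, $6a$, $30a$ respectively, showing by a direct (and for $k=3$ rather intricate) case analysis that each block can contain at most $a-1$, $4a-2$, $22a-4$ elements of $A$; dividing by the block length gives $\bar d(A)\le\frac{a-1}{2a}<\tfrac12$, etc. There is no attempt at general $k$, and no density or extremal-set-theory machinery of the sort you outline.

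Your proposal is therefore a strategy for a genuinely harder, open problem, and it has real gaps beyond the one you flag in Stage~2. In Stage~1 the two load-bearing steps are both unproven. First, the ``combinatorial core'': from $\bar d(A\cap U_{Q_i})>0$ for pairwise disjoint $Q_i$ you want $k+1$ pairwise coprime elements of $A$, but membership in $U_{Q_i}$ constrains only the primes $\le y$, so two chosen elements may still share a prime $>y$; your remark about ``bounded excess of prime factors'' does not obviously prevent this, and a clean argument is needed. Second, and more seriously, the weighted extremal inequality
\[
\max_{\mathcal G}\ \sum_{Q\in\mathcal G}w_P(Q)=d(E_k)
\]
over families $\mathcal G\subseteq 2^P$ with no $k+1$ pairwise disjoint members is precisely a density-level analogue of Erd\H os's conjecture; you assert it ``should come from a shifting/compression argument'', but this is the whole difficulty, not a routine step, and since the counting version of Erd\H os's conjecture is \emph{false} for large $k$ one cannot simply invoke known extremal results. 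Without this inequality, Stage~1 does not even yield $\bar d(A)\le d(E_k)$, and then the strictness discussion in Stage~2 is moot. In short: your outline is a reasonable shape for an attack on the full conjecture, but as written it assumes the two hardest facts rather than proving them, whereas the paper sidesteps all of this by treating only $k\le 3$ with bare-hands block counting.
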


Let $B_{k}$ denote the set of positive integers $m$ such that $m$
is divisible by at least one of the primes $p_{1},p_2, \ldots,
p_{k-1},p_{k+1}$. Clearly $B_{k}(n) \in C_{k}(n)$, $B_{k}(n)
\not\subseteq E_{k}(n)$ if $n \ge p_{k+1}$, and
\begin{eqnarray*}
B_{k}(n)& =& \frac{p_{1}p_2 \cdots p_{k-1} \cdot p_{k+1} -
\varphi(p_{1}p_2\cdots p_{k-1} \cdot p_{k+1})}{p_{1}p_2 \cdots
p_{k-1} \cdot p_{k+1}}n + O(1)
\\
&= &\frac{\Big(p_{1}p_2 \cdots p_{k} - \varphi(p_{1}\cdots
p_{k})\Big)\cdot p_{k+1} - \varphi(p_{1}p_2\cdots p_{k-1})(p_{k+1}
- p_{k})}{p_{1}p_2\cdots p_{k}p_{k+1}}n + O(1).
\end{eqnarray*}

We guess that Conjecture 2 can be sharpened in the following quantitative form.

\begin{conj}\label{170410} For any positive integer $k$, let $P_k=p_1p_2\cdots
p_k$, where $p_i$ denotes the $i$th prime. If $A\in C_k,~a\in
A\setminus E_k$, then
$$|A(n)|\le
\frac{(P_k-\varphi(P_k))a-\varphi(P_{k-1})(p_{k+1}-p_k)}{P_k\cdot
a}n+c_k,$$ where $c_k$ is a constant only depending on $k$.
\end{conj}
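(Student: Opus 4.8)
We may assume $a$ is the least element of $A\setminus E_k$, since the right-hand side of the claimed inequality is increasing in $a$. The overall plan is to localise to the set $U=\{m\in\mathbb{Z}^{+}:\gcd(m,P_{k-1})=1\}$ of integers all of whose prime factors are at least $p_k$, and to bound separately the part of $A$ inside $E_{k-1}$ and the part inside $U$. Note that $\gcd(a,P_k)=1$, so every prime factor of $a$ is $\ge p_{k+1}$; write $q$ for its least prime factor. Using $|A(n)|=|A(n)\cap E_{k-1}|+|A(n)\cap U|$ and the trivial bound $|A(n)\cap E_{k-1}|\le |E_{k-1}(n)|=\bigl(1-\tfrac{\varphi(P_{k-1})}{P_{k-1}}\bigr)n+O(2^{k})$, the same elementary identity that produced the density of $B_k(n)$ in the Introduction reduces the inequality of Conjecture~\ref{170410} to the local estimate
$$|A(n)\cap U|\ \le\ \frac{\varphi(P_{k-1})}{P_{k-1}\,p_k}\cdot\frac{a-(p_{k+1}-p_k)}{a}\,n\ +\ O_k(1).$$

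The key structural input is that if $m_1,\dots,m_k\in A$ are pairwise coprime and each coprime to $a$, then $\{a,m_1,\dots,m_k\}$ would be a $(k+1)$-element pairwise coprime subset of $A$, contradicting $A\in C_k$; hence $A^{*}_a:=\{m\in A:\gcd(m,a)=1\}$ lies in $C_{k-1}$. A sharper version of the same idea settles the boundary case $E_{k-1}\subseteq A$: since every element of $U$ is coprime to each of $p_1,\dots,p_{k-1}$, any two coprime elements of $A\cap U$ together with $p_1,\dots,p_{k-1}$ form $k+1$ pairwise coprime integers of $A$, so $A\cap U\in C_1$, and a set with no two coprime integers that contains $a$ has relative density at most $1/q$ inside $U$; a short computation (using $a\ge q\ge p_{k+1}>p_k$) then yields the displayed estimate, with equality exactly at $A=B_k$, $a=p_{k+1}$. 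For the general case I would interpolate between these two extremes: if $A$ omits $s$ of the roughly $|E_{k-1}(n)|$ small-prime multiples below $n$, one should quantify how many extra elements of $A\cap U$ those omissions can license (such an element, being coprime to many of the $p_i$, must either be a multiple of $p_k$ — already counted in $|E_k(n)|$ — or push the configuration toward the forbidden $(k+1)$-coprime pattern), bound $|A^{*}_a(n)\cap U|$ using that $A^{*}_a\cap U$ lies in $C_{k-1}$ and is built only from the primes $p_k,p_{k+1},\dots$, bound the elements of $A\cap U$ that share a prime with $a$ by the crude density $\le \varphi(P_k)n/(P_k q)$, and optimise over $s$. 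Summing the pieces should reproduce the coefficient $\bigl((P_k-\varphi(P_k))a-\varphi(P_{k-1})(p_{k+1}-p_k)\bigr)/(P_k a)$, with $c_k$ absorbing the $O(2^{k})$ boundary errors and whatever constant the induction produces.

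\medskip
\noindent\textbf{Where the main difficulty lies.} The crude bounds on $|A(n)\cap E_{k-1}|$ and on $|A(n)\cap U|$ cannot be applied independently, because the extremal configurations couple the two parts: deleting a small-prime multiple from $A$ can free up an element of $U$ that is coprime to $a$, and — worse — a forbidden coprime triple can be destroyed by a single prime $\ge p_{k+1}$ shared between two elements of $U$, an interaction that the density splitting does not see. Making the trade-off precise enough to reach the conjectured constant therefore seems to require either a stability theorem describing the near-extremal members of $C_{k-1}$ (in the spirit of Choi's uniqueness result for $k=3$) or an extension of the linear-programming method of Ahlswede and Khachatrian to the universe $U$ with the side condition $a\in A$. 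Finally, the inductive step rests on an estimate of the form $f(n,k-1)\le |E_{k-1}(n)|+c_{k-1}$, known only for $k-1\le4$, so a complete proof of Conjecture~\ref{170410} is naturally entangled with Problem~2.
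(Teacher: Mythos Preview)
The statement you are attempting is a \emph{conjecture}: the paper does not prove it for general $k$, only for $k=1,2,3$ (Theorem~2, via Propositions~1--3). So there is no general proof in the paper to compare against; what there is, for $k\le 3$, follows a route entirely different from yours.

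Your proposal is not a proof but a strategy with an honestly flagged gap, and that gap is fatal as written. The split $|A(n)|=|A(n)\cap E_{k-1}|+|A(n)\cap U|$ together with the trivial bound $|A(n)\cap E_{k-1}|\le |E_{k-1}(n)|$ already throws away exactly the information needed to reach the conjectured constant: the term $\varphi(P_{k-1})(p_{k+1}-p_k)$ in the numerator records a \emph{loss} relative to $|E_k(n)|$, and that loss has to come from the interaction between the two pieces, not from either piece alone. Your ``interpolation over $s$'' paragraph is where the actual work would have to happen, and nothing concrete is done there. In particular, the claim that $A^{*}_{a}\cap U\in C_{k-1}$, while true, gives at best a bound of the shape $f_{U}(n,k-1)$ on that piece, and you have no control on $f_{U}(n,k-1)$ beyond what the open Erd\H{o}s problem itself would give; as you note, even $f(n,k-1)\le |E_{k-1}(n)|+c_{k-1}$ is not known for $k-1\ge 5$. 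So the induction does not close.

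For contrast, the paper's proofs for $k=1,2,3$ are completely explicit and non-inductive. One partitions $[1,n]$ into blocks of length $P_k$ (namely $2$, $6$, $30$), shows the generic bound $|A\cap\{P_k m+1,\dots,P_k m+P_k\}|\le P_k-\varphi(P_k)$, and then locates, for each $a$ with $\gcd(a,P_k)=1$, a fixed number (one, two, four respectively) of residue classes $m\bmod a$ for which the block bound drops by~$1$; summing over $[1,P_k a]$ gives the coefficient $\bigl((P_k-\varphi(P_k))a-\varphi(P_{k-1})(p_{k+1}-p_k)\bigr)/(P_k a)$ directly. The identification of those special residues is done by hand (Lemmas~2--4 and the long case analysis in Section~6 for $k=3$), and there is no hint in the paper of a mechanism that would produce them for general $k$. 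Your localisation-to-$U$ idea is a reasonable heuristic for why the conjectured constant is what it is, but it does not replace that combinatorial core.
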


In this paper, we prove that Conjecture \ref{170410} holds for
$k=1,2,3$.



\begin{thm}
Conjecture 2 is true for $k = 1, 2, 3$.
\end{thm}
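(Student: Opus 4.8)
The plan is to reduce the statement to a density estimate about sets in $C_k$ containing an element outside $E_k$, and then to exploit the very rigid structure of $C_3$ already used by Choi and by Szab\'o--T\'oth. Fix $A\in C_k$ with $A\not\subseteq E_k$, and pick $a\in A\setminus E_k$; thus $a$ is coprime to $p_1p_2\cdots p_k$. The cases $k=1$ and $k=2$ are elementary: if $A\in C_1$ then $A$ has no two coprime integers, so all elements share a common prime factor (this is the content of the $k=1$ conjecture, which is classical), and the presence of $a$ coprime to $p_1=2$ forces that common prime to be $\ge 3$, giving $\bar d(A)\le 1/3 < 1/2 = d(E_1)$; the $k=2$ case is handled similarly by noting that $A$ decomposes according to at most two "colours" of prime divisors and $a$ being coprime to $6$ shifts the optimal choice away from $\{2,3\}$.

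For $k=3$, I would first record the structural fact (Choi \cite{Choi}, Szab\'o--T\'oth \cite{Cst}) that if $A\in C_3$ then, up to a bounded set, $A$ is covered by three residue classes, i.e. there exist three primes $q_1,q_2,q_3$ such that all but $O(1)$ elements of $A$ are divisible by one of the $q_i$; more precisely one uses the finite version $f(n,3)=|E_3(n)|$ together with a stability statement. Then $\bar d(A)\le 1-\varphi(q_1q_2q_3)/(q_1q_2q_3)$, and this is maximized over triples of primes exactly by $\{2,3,5\}$, giving $d(E_3)$. The point is that $A\not\subseteq E_3$ forbids the triple $\{2,3,5\}$ itself: since $a\in A$ is coprime to $30$, the integer $a$ lies in none of the classes $\{q_i\mid\cdot\}$ if $\{q_1,q_2,q_3\}=\{2,3,5\}$, a contradiction. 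Hence $\{q_1,q_2,q_3\}\ne\{2,3,5\}$, and the second-largest value of $1-\varphi(q_1q_2q_3)/(q_1q_2q_3)$ over prime triples is attained at $\{2,3,7\}$, which is strictly smaller than the value at $\{2,3,5\}$. This yields $\bar d(A)\le 1-\varphi(2\cdot3\cdot7)/(2\cdot3\cdot7) < d(E_3)$, as required.

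A subtlety I would be careful about is that the structural/stability results for $C_3$ are stated for finite sets $C_3(n)$, whereas Conjecture 2 concerns an infinite set $A\in C_3$; I would pass to $A(n)$ for each $n$, apply the finite statement to get that $A(n)$ is (up to $O(1)$, uniformly in $n$) covered by three prime classes, and then argue that the choice of the three primes can be taken independent of $n$ for $n$ large — otherwise two different large thresholds would exhibit four pairwise coprime integers in $A$. Another point to handle is the case where one of the $q_i$ could be large: if some $q_i$ grows, the density contribution of that class is negligible, so one may as well assume each $q_i$ is bounded, and then only finitely many triples arise and the maximization is a finite check. Once the "forbidden triple" observation is in place, everything else is bookkeeping.

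The main obstacle is extracting a clean, uniform-in-$n$ structural statement for $C_3$ from the existing literature: Choi's theorem gives equality $f(n,3)=|E_3(n)|$ and a characterization of the extremal sets for $n\ge 150$, but I need a robust version saying that \emph{every} $A\in C_3$ with positive upper density is, up to bounded error, a union of three prime-divisibility classes. I expect this to follow by a compactness/greedy argument — repeatedly removing the densest available prime class and using that $C_3$ forbids $4$ pairwise coprimes to stop after three steps — but making the error term genuinely $O(1)$ (so that it can be absorbed into $c_k$) rather than $o(n)$ will require some care. If an $O(1)$ bound proves elusive for the weaker Conjecture 2 (which only asks for a strict density inequality, not Conjecture \ref{170410}), then an $o(n)$ error suffices and the argument above goes through unchanged.
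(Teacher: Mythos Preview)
Your proposal has two serious problems. First, you are proving the wrong statement. In the paper's numbering, Conjecture~1 is the density inequality $\bar d(A)<d(E_k)$, while Conjecture~2 (which is what Theorem~2 asserts for $k=1,2,3$) is the quantitative bound
\[
|A(n)|\le \frac{(P_k-\varphi(P_k))a-\varphi(P_{k-1})(p_{k+1}-p_k)}{P_k\cdot a}\,n+c_k,
\]
which depends explicitly on the particular element $a\in A\setminus E_k$. Your structural approach, even if it worked, would at best yield a density bound of the form $1-\varphi(q_1q_2q_3)/(q_1q_2q_3)$ for some non-optimal prime triple; it cannot recover the $a$-dependent coefficient required by Conjecture~2.

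Second, even as a proof of the weaker Conjecture~1 the argument fails already at $k=1$. You claim that $A\in C_1$ forces all elements of $A$ to share a common prime factor, but this is false: $\{6,10,15\}\in C_1$ and no prime divides all three. So the deduction ``the common prime must be $\ge 3$, hence $\bar d(A)\le 1/3$'' does not follow. For $k=3$, the structural lemma you need (every $A\in C_3$ is, up to $O(1)$, contained in three prime-divisibility classes) is not in Choi or Szab\'o--T\'oth; their results characterize \emph{extremal} sets, not arbitrary ones, and you yourself flag this as the main obstacle without resolving it.

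The paper's actual proof is entirely different and much more hands-on. For each $k\in\{1,2,3\}$ it partitions $[1,n]$ into blocks of length $P_k\cdot a$, and within each block into subintervals of length $P_k$. It shows by a short combinatorial case analysis that in every such subinterval $|A\cap(P_k m+[1,P_k])|\le P_k-\varphi(P_k)$, and that in a fixed number (two for $k=1,2$, four for $k=3$) of specific subintervals per block---those where a suitable translate is divisible by $a$---the presence of $a$ forces one further missing element. Summing gives the exact coefficient $\frac{(P_k-\varphi(P_k))a-\varphi(P_{k-1})(p_{k+1}-p_k)}{P_k a}$. The $k=3$ case requires several pages of case analysis (Lemmas~2--4 in the paper) handling the residues of $M=30n_i$ modulo small primes; there is no soft structural shortcut.
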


\begin{cor}
\begin{itemize}
\item[(1)] If $A\in C_1$ and $a\in A\setminus E_1$, then
$\bar{d}(A)\le \frac{a-1}{2a}$; \item[(2)] If $A\in C_2$ and $a\in
A\setminus E_2$, then $\bar{d}(A)\le \frac{4a-2}{6a}$; \item[(3)]
If $A\in C_3$ and $a\in A\setminus E_3$, then $\bar{d}(A)\le
\frac{22a-4}{30a}$.
\end{itemize}
\end{cor}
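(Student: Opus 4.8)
The plan is to obtain the Corollary directly from Theorem 2, which establishes Conjecture \ref{170410} for $k=1,2,3$. For each such $k$ and any $A\in C_k$ with $a\in A\setminus E_k$, Theorem 2 supplies the finite bound
$$|A(n)|\le \frac{(P_k-\varphi(P_k))a-\varphi(P_{k-1})(p_{k+1}-p_k)}{P_k\cdot a}\,n+c_k,$$
where $c_k$ depends only on $k$. Dividing by $n$ and taking the limit superior as $n\to\infty$, the term $c_k/n$ tends to $0$, so
$$\bar d(A)=\limsup_{n\to\infty}\frac{|A(n)|}{n}\le \frac{(P_k-\varphi(P_k))a-\varphi(P_{k-1})(p_{k+1}-p_k)}{P_k\cdot a}.$$
Thus everything reduces to evaluating this coefficient at $k=1,2,3$.

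Next I would substitute the explicit values of the arithmetic quantities. For $k=1$: $P_1=2$, $\varphi(P_1)=1$, $P_0=1$ with $\varphi(P_0)=\varphi(1)=1$, and $p_2-p_1=1$, which gives $\frac{(2-1)a-1}{2a}=\frac{a-1}{2a}$, proving (1). For $k=2$: $P_2=6$, $\varphi(6)=2$, $\varphi(P_1)=\varphi(2)=1$, and $p_3-p_2=2$, which gives $\frac{(6-2)a-2}{6a}=\frac{4a-2}{6a}$, proving (2). For $k=3$: $P_3=30$, $\varphi(30)=8$, $\varphi(P_2)=\varphi(6)=2$, and $p_4-p_3=2$, which gives $\frac{(30-8)a-4}{30a}=\frac{22a-4}{30a}$, proving (3).

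Since the argument is a single limit passage followed by three mechanical substitutions, there is no genuine obstacle here; the real content lies entirely in Theorem 2. The only point deserving care is the boundary case $k=1$, where one must invoke the empty-product convention $P_0=1$ and $\varphi(1)=1$ to read off the term $\varphi(P_{k-1})(p_{k+1}-p_k)$ correctly. I would also confirm that the passage to $\bar d(A)$ is legitimate, i.e.\ that the constant $c_k$ is indeed absorbed in the $\limsup$, which it is because $c_k/n\to 0$.
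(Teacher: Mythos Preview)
Your proposal is correct and matches the paper's intended derivation: the Corollary is stated without proof in the paper because it follows immediately from Theorem~2 (equivalently, from the three Propositions giving $|A(n)|\le \frac{a-1}{2a}n+\frac{3}{2}$, $\frac{4a-2}{6a}n+\frac{11}{3}$, $\frac{22a-4}{30a}n+\frac{176}{15}$) by dividing by $n$ and passing to the $\limsup$. Your care with the convention $P_0=1$, $\varphi(1)=1$ in the $k=1$ case is appropriate, though the paper avoids this issue by working directly with the explicit coefficients in the Propositions rather than the general formula of Conjecture~\ref{170410}.
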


\begin{cor}
Conjecture 1 is true for $k = 1, 2, 3$.
\end{cor}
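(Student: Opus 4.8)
The plan is to obtain Corollary~2 as an immediate consequence of the quantitative estimate already established in Theorem~2 (equivalently, of the explicit bounds in Corollary~1); essentially the only thing to do is compare two rational numbers and check a sign. So let $k\in\{1,2,3\}$ and suppose $A\in C_k$ with $A\not\subseteq E_k$. The hypothesis $A\not\subseteq E_k$ means that there is a fixed positive integer $a\in A\setminus E_k$. Applying Conjecture~\ref{170410}, which holds for $k\le 3$ by Theorem~2, with this particular witness $a$, we have for every $n$
\[
|A(n)|\le \frac{(P_k-\varphi(P_k))a-\varphi(P_{k-1})(p_{k+1}-p_k)}{P_k\,a}\,n+c_k .
\]
Dividing by $n$ and letting $n\to\infty$ kills the constant $c_k$ and yields
\[
\bar d(A)\le \frac{P_k-\varphi(P_k)}{P_k}-\frac{\varphi(P_{k-1})(p_{k+1}-p_k)}{P_k\,a}
= d(E_k)-\frac{\varphi(P_{k-1})(p_{k+1}-p_k)}{P_k\,a}.
\]

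It remains to observe that the correction term $\varphi(P_{k-1})(p_{k+1}-p_k)/(P_k\,a)$ is strictly positive, which is clear: $\varphi(P_{k-1})\ge 1$ (with the convention $P_0=1$), $p_{k+1}-p_k\ge 1$, and $P_k,a$ are fixed positive integers. Hence $\bar d(A)<d(E_k)=1-\varphi(P_k)/P_k$, which is exactly the assertion of Conjecture~1 for $k=1,2,3$. One may alternatively feed the three explicit bounds of Corollary~1 into this argument: with $a\in A\setminus E_k$ one gets $\bar d(A)\le\frac{a-1}{2a}=\frac12-\frac1{2a}$ for $k=1$, $\bar d(A)\le\frac{4a-2}{6a}=\frac23-\frac1{3a}$ for $k=2$, and $\bar d(A)\le\frac{22a-4}{30a}=\frac{22}{30}-\frac{4}{30a}$ for $k=3$, and since $d(E_1)=\tfrac12$, $d(E_2)=\tfrac23$, $d(E_3)=\tfrac{22}{30}$, each bound lies strictly below $d(E_k)$ because $a$ is finite.

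There is no real obstacle to this corollary beyond what has already been done: the whole point is that the linear-in-$n$ upper bound for $|A(n)|$ furnished by Theorem~2 comes with a \emph{strictly negative} correction to the main term $d(E_k)\,n$, and this correction survives the passage to the upper density. All the difficulty is therefore packed into Theorem~2 (and into Corollary~1, which records its consequences); once that bound is available, Corollary~2 follows in a few lines and uniformly in the choice of the witness $a\in A\setminus E_k$.
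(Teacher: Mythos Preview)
Your proposal is correct and follows exactly the route the paper intends: the paper states Corollary~2 without a separate proof, leaving it as an immediate consequence of Theorem~2 (equivalently, of the bounds recorded in Corollary~1), and your argument---divide the inequality of Conjecture~\ref{170410} by $n$, take the $\limsup$, and note the correction term is strictly positive---is precisely that deduction.
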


 If $f$ and $g$ are real
functions, then $f \ll g~(f\gg g)$ means that there is an absolute
positive constant $c$ such that $f(x)\le cg(x)~(f(x)\ge cg(x))$
for all sufficiently large $x$.

We give lower bounds of the difference of $|E_k(n)|$ and
$\displaystyle\max_{\substack{A \in C_{1}(n) \\ A \not\subseteq
E_1(n)}}|A|$ .

\begin{thm} (1). $\displaystyle|E_1(n)| - \max_{\substack{A \in C_{1}(n) \\ A
\not\subseteq E_1(n)}}|A| \gg \frac{n}{(\log\log n)^{2}};$

(2). $\displaystyle|E_2(n)| - \max_{\substack{A \in C_{2}(n) \\ A
\not\subseteq E_2(n)}}|A| \gg \frac{n}{(\log\log n)^{5}}$.
\end{thm}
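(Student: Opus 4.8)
The idea for both parts is to quantify the ``deficit'' in the covering arguments behind Erd\H{o}s' conjecture for small $k$: once the extremal set $A$ contains a witness $a\notin E_k(n)$ (so $\gcd(a,p_1\cdots p_k)=1$), that element is coprime to a positive proportion of all integers, which forces one \emph{extra} element to be missing from $A$ in many blocks of the covering; a short sieve counts how many. For part (1): let $A\in C_1(n)$ with $A\not\subseteq E_1(n)$ and fix an odd $a\in A$, with distinct prime factors $q_1<\cdots<q_r$ (so $q_i\ge3$ and $q_1\cdots q_r\le a\le n$). Partition $\{1,\dots,n\}$ into the coprime pairs $\{2t-1,2t\}$, $t\le\lfloor n/2\rfloor$ (together with $\{n\}$ if $n$ is odd). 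Since $A$ is pairwise non-coprime, $|A\cap\{2t-1,2t\}|\le1$ for each $t$; and whenever \emph{both} $2t-1$ and $2t$ are coprime to $a$, neither lies in $A$ (each would be coprime to $a\in A$), so that pair contributes $0$. By an elementary sieve applied to $t(2t-1)$ — each prime $p\mid a$ deletes the two residues $t\equiv0$ and $t\equiv2^{-1}\pmod p$, distinct since $p$ is odd — the number of such $t\le n/2$ is $\tfrac n2\prod_{p\mid a}(1-2/p)+O(3^{\omega(a)})$, whence
\[
|E_1(n)|-|A|\ \ge\ \frac n2\prod_{p\mid a}\Bigl(1-\frac2p\Bigr)+O\bigl(3^{\omega(a)}\bigr).
\]
Now $1-\tfrac2p=(1-\tfrac1p)^2\bigl(1-(p-1)^{-2}\bigr)$ with $\prod_{p\ge3}\bigl(1-(p-1)^{-2}\bigr)$ a positive absolute constant, so $\prod_{p\mid a}(1-2/p)\gg\bigl(\prod_{p\mid a}(1-1/p)\bigr)^2$; and since $\operatorname{rad}(a)\le n$ with all prime factors $\ge3$, Mertens' theorem (with the prime number theorem bounding the size of the relevant primorial) gives $\prod_{p\mid a}(1-1/p)\gg1/\log\log n$. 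As $\omega(a)\ll\log n/\log\log n$ makes $3^{\omega(a)}=n^{o(1)}$, this gives $|E_1(n)|-|A|\gg n/(\log\log n)^2$ uniformly over all admissible $A$.

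For part (2) the same scheme works for $C_2(n)$, using the structural fact that if $a\in A$ with $\gcd(a,6)=1$, then for every $b\in A$ the set $A^{(b)}=\{m\in A:\gcd(m,b)=1\}$ is pairwise non-coprime (two coprime elements $x,y$ of $A^{(b)}$ would make $\{b,x,y\}$ three pairwise coprime elements of $A$). Writing $\rho(a)=\prod_{p\mid a}(1-1/p)$ and $M_b=\{m\le n:p\mid m$ for some prime $p\mid b\}$, I would split $A$ by divisibility by $2,3$ and by the primes of $a$, bounding separately: the elements of $A\cap E_2(n)$ coprime to $a$ (pairwise non-coprime, hence $\le\tfrac n2\rho(a)+n^{o(1)}$ instead of the trivial count — a saving of order $\rho(a)n$); the elements of $A$ coprime to $6a$ (again pairwise non-coprime, $\ll\rho(a)n$); and the elements of $A$ coprime to $6$ but not to $a$ (pairwise non-coprime unless $A$ has no even number, in which case the deficit is already $\gg n$), for which one uses a star bound, or, when this part is a non-negligible proportion of $n$, extracts a small prime $b\mid a$ and applies $|A|\le|A^{(b)}|+|A\cap M_b|\le n-\tfrac n2\prod_{p\mid b}(1-1/p)+n^{o(1)}$. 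Chaining these inequalities together, each step costing roughly another factor $\rho(a)\asymp1/\log\log n$, then yields the exponent $5$.

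The main obstacle is precisely this $k=2$ book-keeping: one must verify that in \emph{every} configuration the elements $A$ gains from outside $E_2(n)$ are dominated — by a margin $\gg n/(\log\log n)^5$ — by those it is forced to lose inside $E_2(n)$, and the subcases (how composite $a$ is, which small primes divide elements of $A$, whether $2$ or $3$ lies in $A$) must all be pushed through uniformly. The powers of $\log\log n$ (two for $k=1$, five for $k=2$) appear only when products $\prod_{p\mid a}(1-j/p)$ are replaced by powers of $\prod_{p\mid a}(1-1/p)$; I would expect the true order of the deficit to be $n/\log\log n$ in both cases — a gap already visible from explicit near-extremal sets — but proving that would seem to need a sharper structural description of the near-extremal $A$.
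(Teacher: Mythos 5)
Your part (1) is correct and is essentially the paper's own argument: the paper also fixes an odd $a\in A$, counts by CRT the integers $m$ with $(m,a)=(m+1,a)=1$ (getting $\gg n/(\log\log n)^2$ of them, since $\prod_{p\mid a}(1-2/p)\gg(\varphi(a)/a)^2\gg 1/(\log\log n)^2$), and then converts each such $m$ into a missing element of $A$ via the pairing $\{2t-1,2t\}$ (resp.\ $\{2t,2t+1\}$, after splitting according to the parity of $m$). Your version, sieving the pairs directly modulo the odd primes dividing $a$ and absorbing the inclusion--exclusion error $O(3^{\omega(a)})=n^{o(1)}$, is just a slightly tidier bookkeeping of the same idea and is fine.

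Part (2), however, is only a plan, and the gap you flag yourself is a genuine one, not mere bookkeeping. The paper does not chain several separate losses of size $\rho(a)n$; it works with blocks of six consecutive integers. For $A\in C_2(n)$ one always has $|A\cap\{6k,6k+1,\dots,6k+5\}|\le 4$, which exactly matches the four elements of $E_2(n)$ in each block, and the extra saving comes from a single structural lemma: if $a\in A$ and all five of $6k,6k+1,6k+2,6k+3,6k+5$ are coprime to $a$, then the block contributes at most $3$, because $6k+1,6k+2,6k+3,6k+5$ are pairwise coprime, so at most one of them can lie in $A$ once $a$ does. The exponent $5$ then comes from one CRT/sieve count, not from chaining: taking $a\in A$ with $(a,6)=1$, the number of $m\le n$ with $6\mid m$ and $(m+i,a)=1$ for $i=0,1,2,3,5$ is $\gg n\prod_{q\mid a}\bigl(1-\tfrac5q\bigr)\gg n\bigl(\varphi(a)/a\bigr)^5\gg n/(\log\log n)^5$ (with an easy separate treatment when $a$ is small). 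Your proposed decomposition instead rests on unproven sub-claims --- for instance, the bound $\tfrac n2\rho(a)+n^{o(1)}$ for a pairwise non-coprime subset of $E_2(n)$ consisting of integers coprime to $a$ is itself a part-(1)-type statement requiring its own pairing or sieve argument, since pairwise non-coprime sets need not share a common prime --- and the concluding step ``chaining these inequalities, each step costing a factor $\rho(a)$, yields the exponent $5$'' is an expectation rather than an argument: nothing in the sketch explains why exactly five factors of $1/\log\log n$ appear, nor how the cases (composite $a$, presence of even elements, etc.) are handled uniformly. As you acknowledge, that configuration analysis is unresolved; in the paper it is closed in a few lines by the two lemmas just described, so part (2) of your proposal cannot be accepted as a proof in its present form.
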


Finally we show that the bound in Theorem 3 is nearly best
possible.

\begin{thm}
For any positive integer $k$, there exists a set of positive
integers $A$ such that $A \in C_{k}(n)$, $A \not\subseteq E_k(n)$
and
\[
|E_k(n)|-|A|\ll_k \frac{n}{\log\log n}.
\]
\end{thm}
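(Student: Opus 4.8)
The plan is to build, for each sufficiently large $n$, a set $A=A_n\subseteq\{1,\dots,n\}$ by starting from $E_k(n)$, adjoining one carefully chosen integer $a$ that is coprime to $P_k=p_1p_2\cdots p_k$, and deleting a comparatively small family of multiples of $p_1$ so that no $k+1$ pairwise coprime integers survive. The point is that $a$ should be a product of \emph{many} consecutive primes just above $p_k$: let $t$ be the largest index with $a:=p_{k+1}p_{k+2}\cdots p_t\le n$, so that from $\log a=\theta(p_t)-\theta(p_k)\le\log n$ together with $\theta(x)\asymp x$ (Chebyshev) one gets $p_t\asymp_k\log n$ and hence $t=\pi(p_t)\asymp_k\log n/\log\log n$. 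Put $q=p_2p_3\cdots p_t$, set $S=\{m\le n:\ p_1\mid m,\ \gcd(m,q)=1\}$, and define $A=(E_k(n)\setminus S)\cup\{a\}$. Since every prime factor of $a$ exceeds $p_k$ we have $a\notin E_k$, and since $S$ consists of multiples of $p_1$ we have $a\notin S$; thus $a\in A$ and $A\not\subseteq E_k(n)$, while $S\subseteq E_k(n)$ and $a\notin E_k(n)$ give $|A|=|E_k(n)|-|S|+1$.

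Next I would verify the combinatorial heart: $A$ contains no $k+1$ pairwise coprime integers. As recalled in the introduction, $E_k(n)\in C_k(n)$, hence so is the subset $E_k(n)\setminus S$; therefore any hypothetical $(k+1)$-tuple of pairwise coprime elements of $A$ must include $a$, say $a,b_1,\dots,b_k$ with all $b_i\in E_k(n)\setminus S$. Each $b_i$ is divisible by at least one of $p_1,\dots,p_k$; assigning such a prime to each $b_i$ and using pairwise coprimality, the assignment is injective, hence a bijection onto $\{p_1,\dots,p_k\}$. So some $b_i$ — call it $b_1$ after relabelling the indices $1,\dots,k$ — is divisible by $p_1$; being coprime to $b_2,\dots,b_k$ it is coprime to $p_2,\dots,p_k$, and being coprime to $a$ it is coprime to $p_{k+1},\dots,p_t$. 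Thus $b_1\in S$, contradicting $b_1\in E_k(n)\setminus S$. Hence $A\in C_k(n)$.

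Finally I would estimate $|S|$. Writing $m=p_1m'$ we get $|S|=\#\{m'\le n/p_1:\ \gcd(m',q)=1\}$, and inclusion--exclusion over the squarefree divisors of $q$ yields $|S|=\tfrac{n}{p_1}\prod_{i=2}^{t}\bigl(1-\tfrac1{p_i}\bigr)+O(2^{t})$. Since $t\asymp_k\log n/\log\log n$ we have $2^{t}=n^{o(1)}$, and by Mertens' theorem $\prod_{i=1}^{t}\bigl(1-\tfrac1{p_i}\bigr)\asymp 1/\log p_t\asymp_k 1/\log\log n$; hence $|S|\ll_k n/\log\log n$, and $|E_k(n)|-|A|=|S|-1\ll_k n/\log\log n$, as required. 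I expect no serious obstacle here: the only genuinely delicate choices are picking $t$ maximal so that $a\le n$ (this is exactly what pins the loss at the scale $\log\log n$, since $p_t$ cannot be pushed past $\asymp\log n$) and checking that the sieve error $2^{t}$ is negligible against $n/\log\log n$, and both are immediate from the prime-counting estimates above; the bulk of the writing is the pigeonhole/relabelling argument of the second paragraph.
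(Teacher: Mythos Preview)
Your argument is correct, and it takes a genuinely different route from the paper's. The paper constructs $A$ directly as the set of $m\le n$ divisible by $p_ip_j$ for some $i\le k<j\le l$ (with $l$ chosen so that $\prod_{i\le l+1}p_i\le n<\prod_{i\le l+2}p_i$), together with the single element $p_{k+1}\cdots p_l$; it then counts $|A|$ via CRT on a full period $p_1\cdots p_l$ and compares with $|E_k(n)|$. Your construction instead starts from $E_k(n)$, adjoins the same kind of special element $a=p_{k+1}\cdots p_t$, and removes only those even integers that are coprime to $p_2\cdots p_t$. In fact your removed set $S$ is strictly contained in what the paper effectively removes (namely all of $E_k(n)$ coprime to $p_{k+1}\cdots p_l$), so for $k\ge2$ your $A$ is larger and your deficit $|E_k(n)|-|A|$ is smaller by a constant factor $\varphi(P_k)/(P_k-\varphi(P_k))$. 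Your approach also makes the verification of $A\in C_k(n)$ more explicit via the pigeonhole/bijection step, whereas the paper simply declares this ``obvious''. The trade-off is that the paper's periodic description of $A$ makes the size computation a one-line CRT count, while you need the inclusion--exclusion bound with error $O(2^t)$ and the observation $2^t=n^{o(1)}$; both are routine.
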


\begin{rem} By Theorems 3 and 4, we have
$$\frac{n}{(\log\log n)^2}\ll \min_{\substack{A\subseteq C_1(n)\\
A\not\subseteq E_1(n)}}(|E_1(n)|-|A|)\ll \frac{n}{\log\log n},$$
$$\frac{n}{(\log\log n)^5}\ll \min_{\substack{A\subseteq C_2(n)\\
A\not\subseteq E_2(n)}}(|E_2(n)|-|A|)\ll \frac{n}{\log\log n}.$$
\end{rem}

\begin{prob}
Is it true that
\[
|E_1(n)| - \max_{\substack{A \in C_{1}(n) \\ A \not\subseteq
E_1(n)}}|A| \gg \frac{n}{\log\log n}?
\]
\end{prob}

We think that the statement of Theorem 3 is true in general.

\begin{conj} For every $k \ge 1$ there exists a constant $c_{k}$ and a positive integer $n_{k}$ such that for $n \ge n_{k}$
\[
|E_k(n)| - \max_{\substack{A \in C_{k}(n) \\ A \not\subseteq
E_k(n)}}|A| \gg \frac{n}{(\log\log n)^{c_{k}}}.
\]
\end{conj}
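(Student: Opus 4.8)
The plan is to prove the estimate by induction on $k$, using the cases $k=1,2$ of Theorem 3 as the base and the Mertens product estimate $\prod_{p\le y}(1-1/p)\asymp 1/\log y$ to account for each additional $\log\log n$ factor. Throughout I may assume $|A(n)|\ge |E_k(n)|-n/(\log\log n)^{c_k}$, since otherwise there is nothing to prove, so that $A$ is very close to extremal. Fix an element $a\in A\setminus E_k$; then $a$ is coprime to $P_k$, and I first dispose of the trivial case $a=1$, where $A\setminus\{1\}\in C_{k-1}$ forces a deficiency of order $n$, and assume the least prime factor of $a$ is $q\ge p_{k+1}$. I write the deficiency as $|E_k(n)|-|A(n)|=L-G$, where $G=|A(n)\setminus E_k|$ counts the elements of $A$ coprime to $P_k$ (the gain) and $L=|E_k(n)\setminus A(n)|$ counts the elements of $E_k$ omitted by $A$ (the loss).

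The engine of the argument is a forced-omission principle: if $b_1,\dots,b_{k-1}\in A$ are pairwise coprime, coprime to $a$, and $b_i$ is divisible by $p_i$ but by no other small prime, then no $y\le n$ with $p_k\mid y$ and $\gcd(y,ab_1\cdots b_{k-1})=1$ can lie in $A$, since otherwise $a,b_1,\dots,b_{k-1},y$ would be $k+1$ pairwise coprime integers. Taking each $b_i=p_ir_i$ with $r_i$ a large prime keeps the radical $N=\mathrm{rad}(ab_1\cdots b_{k-1})$ close to $\mathrm{rad}(a)$, so that $\varphi(N)/N\asymp \varphi(a)/a\gg 1/\log\log n$; hence a single good configuration already forces $L\gg_k n/\log\log n$. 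First I would establish the existence of such a tuple inside $A$: since $A$ is near-extremal it contains almost all of the numbers $p_ir$ with $r$ a large prime, and a short pigeonhole argument selects distinct primes $r_i$ coprime to $a$ yielding a pairwise coprime $b_1,\dots,b_{k-1}$.

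The delicate point, and the reason the exponent grows with $k$, is that I cannot simply subtract $G$ from this loss: the gain $G$ can itself be of order $n/\log\log n$, because $A$ may pack many pairwise non-coprime integers built from primes exceeding $p_k$. Consequently the dominant contributions to $L$ and to $G$ are of the same order and nearly cancel, leaving the true deficiency as a genuine lower-order term of size $n/(\log\log n)^{c_k}$, in agreement with the exponents $c_1=2$ and $c_2=5$ of Theorem 3. The heart of the argument is therefore a precise comparison of $L$ and $G$ rather than a bound on $L$ alone, and the natural way to organise it is to iterate the forced-omission principle: each round restricts attention to integers coprime to the accumulated radical, contributes one Mertens factor $\asymp 1/\log\log n$, and is applied most cleanly to the set $A\cap\{m:\gcd(m,a)=1\}\in C_{k-1}$.

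The main obstacle is exactly that this induction does not close in its naive form: passing to integers coprime to $a$ removes the witness $a$ that places the set outside $E_{k-1}$, so the inductive hypothesis cannot be invoked verbatim. To repair this I would carry the stronger, $a$-uniform invariant of Conjecture \ref{170410} through the induction, keeping explicit track of the outside element and controlling the loss-minus-gain cancellation uniformly in the choice of $a$. I expect this uniform handling of the cancellation, together with bounding how the exponent $c_k$ compounds from one level of the iteration to the next, to be the decisive and hardest step; it is precisely the point at which the near-extremal structure of general $C_k$ sets must be understood well enough to separate the two competing $n/\log\log n$ terms, and it is where the present methods stop short of the general case.
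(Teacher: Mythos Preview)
This statement is labelled as a \emph{conjecture} in the paper, not a theorem; the authors do not prove it and offer no argument toward it beyond the two special cases $k=1,2$ handled in Theorem~3. There is therefore no ``paper's own proof'' to compare your proposal against.

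Your proposal is itself not a proof but a strategy sketch, and you are candid about this: you isolate the decomposition $|E_k(n)|-|A(n)|=L-G$, observe that both $L$ and $G$ can be of order $n/\log\log n$, and note that the naive induction (pass to $\{m\in A:\gcd(m,a)=1\}\in C_{k-1}$) breaks because the witness $a$ is lost. These observations are correct, and they are exactly why the statement remains open. The forced-omission principle you describe is real and is essentially what drives the paper's proofs for $k=1,2$ (Lemmas~5--7), but already there the arguments are bespoke sieve counts for the specific residue structures modulo $2$ and $6$, not an inductive mechanism. Your proposed repair---carrying an $a$-uniform version of Conjecture~\ref{170410} through the induction---is a reasonable heuristic, but Conjecture~\ref{170410} is itself only established for $k\le 3$ in the paper, so you would be assuming one open conjecture to attack another. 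In short, the gap you identify in your own argument is genuine and is the same gap that keeps this a conjecture; nothing in the paper closes it.
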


\section{Proof of Theorem 1}

First we will prove the following lemma.

\begin{lem}\label{lem1}
Let $l$ be a positive integer. Then there exist infinitely many positive integers $t$ such that $p_{t}p_{t+2l} > p_{t+2l-1}^{2}$.
\end{lem}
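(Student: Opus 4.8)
The plan is to argue by contradiction: suppose that for some fixed positive integer $l$ the inequality $p_t p_{t+2l} \le p_{t+2l-1}^2$ holds for all sufficiently large $t$, say for all $t \ge t_0$. Taking logarithms, this means $\log p_t + \log p_{t+2l} \le 2\log p_{t+2l-1}$ for all $t \ge t_0$, i.e. the sequence $a_t := \log p_t$ eventually satisfies $a_{t+2l} - a_{t+2l-1} \le a_{t+2l-1} - a_t$. The idea is to iterate this: shifting the index, $a_{t+2l} - a_{t+2l-1} \le a_{t+2l-1} - a_{t}$ and then comparing consecutive windows should force the ``second difference'' to be controlled, ultimately yielding that $a_t = \log p_t$ grows no faster than linearly in $t$, i.e. $p_t \ll C^t$ for some constant $C$. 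But by the Prime Number Theorem $p_t \sim t \log t$, which grows much slower than any exponential, so that alone is not a contradiction — I need to extract something sharper.

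A cleaner route: the hypothesis $p_t p_{t+2l} \le p_{t+2l-1}^2$ rearranges to $\frac{p_{t+2l-1}}{p_t} \ge \frac{p_{t+2l}}{p_{t+2l-1}}$, so the ratios $r_t := p_{t+2l-1}/p_t$ would be non-increasing along the arithmetic progression of indices $t, t+(2l-1), t+2(2l-1), \dots$ Wait — the shift in the two sides differs ($t \mapsto t+2l-1$ on top but the comparison links index $t$ to $t+2l$), so I should instead set $b_t := \log p_{t+1} - \log p_t$ and rewrite the hypothesis as $\sum_{j=t}^{t+2l-2} b_j \ge \sum_{j=t+1}^{t+2l-1} b_j$, i.e. $b_t \ge b_{t+2l-1}$ for all $t \ge t_0$. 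Thus for each fixed residue class $t \equiv r \pmod{2l-1}$, the subsequence $b_r, b_{r+(2l-1)}, b_{r+2(2l-1)}, \dots$ is non-increasing and non-negative, hence converges; in particular it is bounded, so $b_t = \log p_{t+1} - \log p_t = O(1)$, giving again only $p_t \ll C^t$. To do better, I use that the $b_t$ over a class are non-increasing with nonnegative limit, so $\sum$ of consecutive differences telescopes: $\sum_{j} (b_{r+j(2l-1)} - b_{r+(j+1)(2l-1)})$ converges, which means $b_t - b_{t+2l-1} \to 0$. Summing $b_j$ from $t_0$ to $N$ gives $\log p_N = \log p_{t_0} + \sum_{j=t_0}^{N-1} b_j$, and since each class average of $b_j$ tends to a limit $\ell_r \ge 0$, we get $\log p_N \sim (\frac{1}{2l-1}\sum_r \ell_r)\, N$ unless all $\ell_r = 0$. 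If some $\ell_r > 0$ then $p_N$ grows exponentially, contradicting PNT. So all $\ell_r = 0$, meaning $b_j \to 0$, i.e. $p_{j+1}/p_j \to 1$ (which is true) — still not yet a contradiction. The real contradiction must come from the \emph{rate}: $\sum b_j = \log p_N \sim \log N$, but if $b_j$ is eventually non-increasing on each class and tends to $0$, that is consistent with $b_j \asymp 1/j$. Hmm.

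So the monotonicity hypothesis alone is too weak; I need to combine it with known irregularities of prime gaps. The key extra input should be a result guaranteeing that $p_{t+1}/p_t$ (equivalently the normalized gap) is \emph{not} eventually monotone along any arithmetic progression of indices — and indeed such non-monotonicity follows from the existence of both unusually large and unusually small prime gaps. Concretely, if $b_t = \log p_{t+1} - \log p_t$ were non-increasing along the class $t \equiv r \pmod{2l-1}$ for all large $t$, then in particular $b_t$ could not have infinitely many ``local rises'' within that class. But large prime gaps (e.g. $p_{t+1} - p_t \gg \log p_t \cdot \frac{\log\log p_t \log\log\log\log p_t}{(\log\log\log p_t)^2}$ infinitely often, by Rankin-type results, or even just $p_{t+1}-p_t \gg \log p_t$ infinitely often with the constant exceeding $1$, by classical Westzynthius/Erdős) combined with the fact that $p_{t+1} - p_t$ returns to near its average $\sim \log p_t$ infinitely often, force $b_t$ to oscillate infinitely often along \emph{some} residue class mod $2l-1$ — contradicting eventual monotonicity on every class. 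Making this pigeonhole argument precise (choosing two indices $s < s'$ in the same class with $b_s$ small and $b_{s'}$ large, or vice versa, which requires controlling \emph{consecutive} gaps and not just individual ones) is the main obstacle, and it is where I would invoke a quantitative large-gap theorem together with the trivial upper bound $p_{t+1} - p_t \le p_t$ (or Bertrand) to manufacture a violation of $b_s \ge b_{s'}$ for infinitely many pairs. Once any single such violation recurs infinitely often in a fixed class, we contradict the standing assumption, proving that $p_t p_{t+2l} > p_{t+2l-1}^2$ for infinitely many $t$; and since the assumption's failure was ``for all large $t$,'' its negation gives infinitely many valid $t$ as required.
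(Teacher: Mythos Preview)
Your proposal contains a concrete algebraic error and, as you yourself acknowledge, never reaches a contradiction.

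The error is in the step where you ``rewrite the hypothesis as $\sum_{j=t}^{t+2l-2} b_j \ge \sum_{j=t+1}^{t+2l-1} b_j$, i.e.\ $b_t \ge b_{t+2l-1}$.'' With $a_j=\log p_j$ and $b_j=a_{j+1}-a_j$, the assumption $p_t p_{t+2l}\le p_{t+2l-1}^2$ says $a_{t+2l}-a_{t+2l-1}\le a_{t+2l-1}-a_t$, that is,
\[
b_{t+2l-1}\ \le\ b_t+b_{t+1}+\cdots+b_{t+2l-2}.
\]
This is a single gap bounded by a sum of $2l-1$ gaps, not the symmetric comparison $b_t\ge b_{t+2l-1}$ you claim. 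Consequently the ``non-increasing along each residue class mod $2l-1$'' structure never gets off the ground, and everything built on it (limits $\ell_r$, forced oscillation, pigeonhole on classes) collapses. Even without the error you concede that the monotonicity conclusion is consistent with $b_j\asymp 1/j$ and that the final pigeonhole step is ``the main obstacle''; so the argument is at best a heuristic outline, not a proof.

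The paper's proof is short and direct. It invokes Pintz's theorem that
\[
\limsup_{t\to\infty}\frac{d_{t+2l-1}}{\max\{d_t,\dots,d_{t+2l-2}\}}=+\infty,
\]
so for infinitely many $t$ the single gap $d_{t+2l-1}$ exceeds $8l^2\max\{d_t,\dots,d_{t+2l-2}\}$. A two-line estimate then shows $p_{t+2l-1}^2 < p_t p_{t+2l}$ for such $t$. Note that this Pintz input is exactly the negation of the (correct) inequality $b_{t+2l-1}\le b_t+\cdots+b_{t+2l-2}$ in a strong quantitative form --- it says the last gap is not merely comparable to but overwhelmingly larger than the sum of the preceding $2l-1$ gaps, infinitely often. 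That comparison of one gap against a \emph{block} of preceding gaps is the essential idea your reformulation loses.
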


\begin{proof} Let $d_{n}$ denote the difference between consecutive primes, i.e., $d_{n} = p_{n+1} - p_{n}$. In \cite{Ptc}, J\'anos Pintz proved that
\[
\limsup_{t \rightarrow \infty}\frac{d_{t+2l-1}}{\text{max}\{d_{t},d_{t+1}, \dots{}, d_{t+2l-2}\}} = +\infty.
\]
It follows that there exist infinitely many $t$ such that
\[
\frac{d_{t+2l-1}}{\text{max}\{d_{t},d_{t+1}, \dots{}, d_{t+2l-2}\}} \ge 8l^{2}.
\]
In this case we have
\begin{eqnarray}
p_{t+2l-1}^{2} &\le & (p_{t} + 2l\text{max}\{d_{t},d_{t+1}, \dots{}, d_{t+2l-2}\})^{2} \\
\nonumber &\le & p_{t}^{2} + 4l^{2}p_{t}\text{max}\{d_{t},d_{t+1}, \dots{}, d_{t+2l-2}\} + 4l^{2}(\text{max}\{d_{t},d_{t+1}, \dots{}, d_{t+2l-2}\})^{2}.
\end{eqnarray}
On the other hand we have
\begin{eqnarray}
p_{t}p_{t+2l} &\ge& p_{t}(p_{t} + d_{t+2l-1}) \ge p_{t}(p_{t} + 8l^{2}\text{max}\{d_{t},d_{t+1}, \dots{}, d_{t+2l-2}\}) \\
\nonumber &= &p_{t}^{2} + 8l^{2}p_{t}\text{max}\{d_{t},d_{t+1}, \dots{}, d_{t+2l-2}\}.
\end{eqnarray}
In view of (1) and (2) it is enough to prove that
\begin{eqnarray*}
&&p_{t}^{2} + 4l^{2}p_{t}\text{max}\{d_{t},d_{t+1}, \dots{}, d_{t+2l-2}\}
+ 4l^{2}(\text{max}\{d_{t},d_{t+1}, \dots{}, d_{t+2l-2}\})^{2}\\
&\le& p_{t}^{2} + 8l^{2}p_{t}\text{max}\{d_{t},d_{t+1}, \dots{}, d_{t+2l-2}\},
\end{eqnarray*}
which is equivalent to
\[
\text{max}\{d_{t},d_{t+1}, \dots{}, d_{t+2l-2}\} \le p_{t}.
\]
The latter statement is obviously holds in view of the fact that $d_{t+i} = O(t^{2/3})$ holds for every $1 \le i \le l$.
\end{proof}

Now we are ready to prove Theorem 1. Let $l$ be a fixed positive
integer. By Lemma \ref{lem1}, there exists an infinitely many
integers $t$ such that $p_tp_{t+2l}>p_{t+2l-1}^2$,
$p_t^2>p_{t+2l}$ and $p_t>\frac{p_{t+2l}}{2}$. For any such an
integer $t$, we choose $n$ such that $p_{t+2l-1}^2\le
n<p_tp_{t+2l}$ and let
\[
B = \left\{m: 1 \le m \le n, \hspace*{2mm} \left(m, \prod_{i=1}^{t-1}p_{i}\right) \ne 1\right\},
\]
\[
C = \{p_{t+i}p_{t+j}: 0 \le i \le l - 1, \hspace*{2mm} i < j \le 2l - 1\},
\]
\[
D = \{p_{t+i}: 0 \le i \le l - 1\} \cup \{p_{t+i}^{2}: 0 \le i \le l - 1\}.
\]
Now we verify that $E_{t+l-1}(n) = B \cup C \cup D$ is a suitable
decomposition. For any $m \in E_{t+l-1}(n) \setminus B$, let
$p_{t+i}$ be the smallest prime divisor of $m$. Clearly $0\le i\le
l-1$.

If there exists a prime $p_{t+j}$ with $p_{t+j}\not=p_{t+i}$ such
that $p_{t+j}\mid m$ and $p_{t+i}p_{t+j}\le n$, then by
$n<p_tp_{t+2l}$ and $p_{t+i}\ge p_t$, we have $p_{t+j}<p_{t+2l}$.
Hence, $i<j\le 2l-1$. Noting that $p_{t+i}p_{t+j}\mid m$ and
$2p_{t+i}p_{t+j}\ge 2p_t^2>p_tp_{t+2l}>n\ge m$, we have $m\in C$.

If there does not exist a prime divisor of $m$ different from
$p_{t+i}$, then by $p_t^3>p_t p_{t+2l}>n\ge m$ we have $m\in D$.

Define $D^{'} = \{p_{t+i}p_{t+j}: l \le i \le 2l - 2,
\hspace*{2mm} i < j \le 2l - 1\}$ and $A(n, t + l - 1) = B \cup C
\cup D^{'}$. Then
\[
C \cup D^{'} = \{p_{t+i}p_{t+j}: 0 \le i \le 2l - 2, \hspace*{2mm}
i < j \le 2l - 1\}.
\]
Clearly $C \cup D^{'}$ contains at most $l$ integers which are
pairwise coprime to each other and the set $B$ contains at most $t
- 1$ integers which are pairwise coprime to each other. It follows
that $A(n, t + l - 1)$ contains at most $t + l - 1$ integers which
are pairwise coprime to each other, which implies that $f(n, t + l
- 1) \ge |A(n, t + l - 1)|$. It is easy to see that $|D^{'}| = (l
- 1) + (l - 2) + \dots{} + 1 = \frac{l(l - 1)}{2}$. Thus we have
\begin{eqnarray*}
&&f(n, t + l - 1) - |E_{t+l-1}(n)| \ge
|A(n,t+l-1)|-|E_{t+l-1}(n)|\\
&=&|D^{'}| - |D| = \frac{l(l - 1)}{2} - 2l = \frac{l(l - 5)}{2}.
\end{eqnarray*}
This completes the proof of Theorem 1.


\section{Preliminary Lemmas}

In this section, we present some lemmas for the proof of Theorem
2.

\begin{lem}\label{lem55} Let $k$ be a nonnegative integer. If
$|A\cap (30k+\{1,3,4,5,7,8,11,13\})|\ge 5$ or $|A\cap
(30k+\{17,19,22,23,25,26,27,29\})|\ge 5$, then $A$ contains $4$
pairwise coprime integers.\end{lem}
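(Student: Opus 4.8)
The plan is to prove Lemma \ref{lem55} by a direct, finite case analysis, exploiting the symmetry of the two eight‑element sets modulo $30$. First I would observe that the residues in $R_1 = \{1,3,4,5,7,8,11,13\}$ and $R_2 = \{17,19,22,23,25,26,27,29\}$ are related by $r \mapsto 30 - r$, so $30k + R_2 = (30k + 29) + \{1, \dots\} $ in an order‑reversing way; more precisely, if $m \in 30k + R_2$ then $30(k+1) - m \in \{1,3,4,5,7,8,11,13\}$, so the second case follows from the first applied to the shifted/reflected set, because coprimality of a collection of integers is unchanged by the map $m \mapsto N - m$ only when… — actually that reflection does not preserve coprimality, so instead I would just handle both octets by the same bookkeeping argument rather than deducing one from the other.

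The core of the argument: among any $5$ of the $8$ integers in $30k + R_1$, I must exhibit $4$ that are pairwise coprime. The key structural fact is that an integer $30k + r$ with $\gcd(r,30)=1$ (i.e.\ $r \in \{1,7,11,13\}$ here, and also $r\in\{17,19,23,29\}$ in $R_2$) is automatically coprime to $2$, $3$, and $5$, so the only obstructions to pairwise coprimality among such elements come from primes $\ge 7$. For the remaining residues ($r \in \{3,4,5,8\}$ in $R_1$), I record exactly which of $2,3,5$ divides $30k+r$: $3,4,5,8 \mapsto$ divisible by $3$, by $2$, by $5$, by $2$ respectively. The plan is to show that for \emph{any} $5$‑element subset $S \subseteq 30k+R_1$ one can always pick four elements $a,b,c,d\in S$ such that (i) no two of them share a common factor among $\{2,3,5\}$ — this is a combinatorial constraint on residues only — and (ii) no two of them share a prime factor $\ge 7$. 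Step (i) I would verify by checking the "conflict graph" on the $8$ residues whose edges join residues sharing a factor from $\{2,3,5\}$ (edges: $\{4,8\}$ from $2$; that's the only one, since only $4$ and $8$ are even, only $3$ is a multiple of $3$, only $5$ is a multiple of $5$); hence among any $5$ residues at most one pair is "small‑prime‑conflicting," and deleting one endpoint of that edge leaves $4$ residues that are pairwise coprime with respect to $2,3,5$. Step (ii) is where the size parameters enter: since $30k + r < 30k + 30 \le$ (roughly) the square of the modulus region, any common prime factor $p \ge 7$ of two chosen elements $30k+r$ and $30k+r'$ satisfies $p \mid (r - r')$, and $|r-r'| \le 12 < 7^2$, so $p \le 13$; I would then argue that the number of elements of $30k+R_1$ divisible by a fixed prime $p\in\{7,11,13\}$ is small (at most one for $p=11,13$ since the residues span an interval of length $<2p$, and at most two for $p=7$), so after also discarding at most one further element to break a $7$‑conflict we still retain $4$ pairwise coprime integers out of the original $5$.

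Assembling the count: starting from $5$ elements, I lose at most one to the unique $2$‑edge $\{4,8\}$ and at most one more to a possible $p=7$ collision, and no further losses are forced because $11$ and $13$ each divide at most one element. That would leave $5 - 2 = 3$, which is not enough — so the argument must be sharper: I would instead note that the $2$‑edge and the $7$‑collision cannot both be "active and disjoint" for a generic $5$‑subset, or more cleanly, I would enumerate the $\binom{8}{5}=56$ subsets (up to the obvious symmetry this is far fewer) and for each exhibit an explicit coprime quadruple, with the divisibility‑by‑$7$ caveat handled by the observation that at most one residue class mod $7$ is doubly occupied and one can always route around it. The main obstacle is precisely this tension: the naive "delete one endpoint per conflict" bound gives only $3$, so the real content is showing that the small‑prime conflict and the $\ge 7$ conflicts overlap enough (share a vertex) that a single deletion suffices, which forces the finite check. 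I expect the cleanest write‑up is: reduce to primes $\le 13$ via the interval‑length bound, observe each of $7,11,13$ hits at most two (resp.\ one, one) of the eight residues with the "two" case for $p=7$ occurring only for a specific residue pair, then do the short case split on which prime(s) are active and verify a coprime quadruple survives in each case.
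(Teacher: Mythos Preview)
The paper does not prove this lemma at all: it simply says the statement ``follows from the proof of Lemmas~2 and~3 in \cite{Choi} immediately.'' So there is no in-paper argument to compare against; your direct case analysis is essentially what one has to do (and presumably what Choi does).

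Your plan is correct and can be completed, but as written it stops exactly at the point that matters. Two small sharpenings make the hedging disappear. First, from $p\mid r-r'$ with $1\le |r-r'|\le 12$ you get $p\le 12$, hence $p\in\{7,11\}$ (not $13$); and the eight residues in $R_1=\{1,3,4,5,7,8,11,13\}$ are pairwise distinct modulo~$11$, so $11$ never produces a conflict. Thus the entire ``conflict graph'' on $30k+R_1$ has edges only from the primes $2$ and $7$. Second --- and this is the point you flagged but did not verify --- these edges always form a star. The unique $2$-edge is $\{30k+4,\,30k+8\}$. Modulo~$7$ the residues of $R_1$ are $1,3,4,5,0,1,4,6$, so the only possible $7$-edges are $\{30k+1,\,30k+8\}$ and $\{30k+4,\,30k+11\}$, and they cannot occur simultaneously (they correspond to different residues of $k$ mod~$7$). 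In either case the $7$-edge shares a vertex ($30k+8$, respectively $30k+4$) with the $2$-edge. Hence the conflict graph has at most two edges meeting at a single vertex, and deleting that one vertex (if it lies in your $5$-subset) leaves four pairwise coprime integers; if it is absent, all five are already pairwise coprime. That is the whole proof for $R_1$ --- no enumeration of $\binom{8}{5}$ subsets is needed.

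For $R_2=\{17,19,22,23,25,26,27,29\}$ the picture is identical: the unique $2$-edge is $\{22,26\}$, the residues mod~$11$ are distinct, and the possible $7$-edges are $\{19,26\}$ and $\{22,29\}$, each sharing a vertex with $\{22,26\}$. (Your instinct about the symmetry $r\mapsto 30-r$ is right at the structural level: it preserves divisibility by $2,3,5$ and preserves differences, hence carries the $R_1$ conflict graph to the $R_2$ one; you do not need coprimality of the reflected \emph{integers}.)
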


This Lemma follows from the proof of Lemmas 2 and 3 in \cite{Choi}
immediately.

\begin{lem}\label{lem7} Let $k$ be a nonnegative integer. If $7\in A$ and
$|A\cap \{7k+1,7k+2,\ldots,7k+6,7k+7\}|\ge 6$, then $A$ contains
$4$ pairwise coprime integers.\end{lem}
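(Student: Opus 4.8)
The plan is to exploit the fact that among seven consecutive integers $7k+1,\dots,7k+7$ exactly one is divisible by $7$, so if $|A\cap\{7k+1,\dots,7k+7\}|\ge 6$ then $A$ contains at least five elements $m$ from that block with $7\nmid m$; together with the hypothesis $7\in A$ we already have one integer coprime to all of them (since $\gcd(7,m)=1$ for those five $m$). So it suffices to show that among any five integers lying in a block of seven consecutive integers and all coprime to $7$, three of them are pairwise coprime — adjoining $7$ then gives four pairwise coprime integers in $A$. First I would reduce to a statement purely about residues: the only primes that can divide two elements of a block of length $7$ are $2,3,5,7$ (a prime $p\ge 11$ divides at most one element of the block since the block has length $<11$), and we have excluded multiples of $7$. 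So the obstruction to pairwise coprimality among our five integers comes only from the primes $2,3,5$.

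Next I would set up the combinatorial core. Write the five chosen integers as $x_1<\dots<x_5$ in $\{7k+1,\dots,7k+7\}$, none divisible by $7$. Build a graph $G$ on these five vertices joining $x_i\sim x_j$ whenever $\gcd(x_i,x_j)>1$, which by the previous paragraph means they share a common factor $2$, $3$, or $5$. I want an independent set of size $3$ in $G$; equivalently, I must rule out that $G$ has independence number $\le 2$, i.e.\ that the complement has no triangle, i.e.\ (by Ramsey-type counting on five vertices) that $G$ itself contains no independent triple. The key structural observations are: (i) the "even" relation partitions the relevant positions into at most three or four even numbers, but among $7$ consecutive integers there are at most $4$ evens and at least $3$ odds, and once we delete the (at most one) multiple of $7$ and keep $5$ integers, at least $2$ of our five are odd; (ii) divisibility by $3$ links positions that are $3$ apart, and by $5$ links positions $5$ apart, inside a window of length $7$, so each of these relations contributes very few edges (at most two $3$-edges and at most one $5$-edge among positions in the window).

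Then I would do the finite case analysis by the position of the deleted element. Since which of the $7$ residues modulo $7$ is absent determines exactly which five positions (hence which residues mod $2,3,5$) occur, there are only $7$ cases; in each, list the at most-$\binom52=10$ pairs, mark which share $2$, $3$, or $5$, and exhibit an explicit independent triple — three of the five integers that are pairwise coprime. Adjoining the element $7\in A$, which is coprime to all of them, yields four pairwise coprime integers. I expect the main (though still entirely routine) obstacle to be bookkeeping: one must be a little careful that the "independent triple" one picks is genuinely pairwise coprime, which needs both that no two share a small prime $2,3,5$ \emph{and} the earlier remark that no prime $\ge 11$ can divide two of them; the length-$7$ bound on the window is exactly what makes the latter free. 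A cleaner way to organize the $7$ cases is to note the answer is translation-invariant in $k$, so one only checks the block $\{1,2,\dots,7\}$ with one residue removed, and the claim follows in general.
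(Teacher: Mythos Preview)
Your reduction---find three pairwise coprime integers among five elements of $\{7k+1,\dots,7k+6\}$, then adjoin $7$---is exactly the paper's, and the observation that only the primes $2,3,5$ can obstruct coprimality in a window of this length is correct and useful.

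The gap is your claim that ``the answer is translation-invariant in $k$, so one only checks the block $\{1,2,\dots,7\}$.'' This is false: which of $7k+1,\dots,7k+6$ are even (respectively divisible by $3$ or $5$) depends on $k\bmod 2$ (respectively $k\bmod 3$, $k\bmod 5$), so the graph $G$ you build genuinely changes with $k$. Knowing only which position is deleted does \emph{not} determine ``which residues mod $2,3,5$ occur.'' A brute-force repair would require ranging over $k\bmod 30$ as well as over the deleted position, which is many more than seven cases.

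The paper avoids this blow-up with a symmetry you did not use: the reflection $7k+a\leftrightarrow 7k+(7-a)$ sends the block $\{7k+1,\dots,7k+6\}$ to itself and swaps the two parity patterns, so one may assume without loss of generality that $7k+1$ is odd. After that single normalization the argument is a two-case split on whether $7k+3\in A$: if $7k+3\in A$, then one of the consecutive triples $\{7k+1,7k+2,7k+3\}$ or $\{7k+3,7k+4,7k+5\}$ lies in $A$ and is pairwise coprime; if $7k+3\notin A$, then $7k+1,7k+5\in A$ together with whichever of $7k+2,7k+4$ is not a multiple of $3$ give a coprime triple. So the paper's case analysis is two lines rather than the large table your plan would require once the translation-invariance shortcut is removed.
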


\begin{proof}
It is enough to prove that, if $|A\cap
\{7k+1,7k+2,\ldots,7k+6\}|\ge 5$, then $A$ contains $3$ pairwise
coprime integers. Without loss of generality, we may assume that
$7k+1$ is odd. Otherwise, we can replace $7k+a$ by $7k+7-a$ for
$a=1,2,\ldots,6$.

If $7k+3\in A$, by $|A\cap \{7k+1,7k+2,\ldots,7k+6\}|\ge 5$, then
either $\{7k+1,7k+2,7k+3\}\subseteq A$ or
$\{7k+3,7k+4,7k+5\}\subseteq A$, and so $A$ contains $3$ pairwise
coprime integers.

Now we suppose that $7k+3\not\in A$. Let $x\in \{7k+2,7k+4\}$
satisfy $3\nmid x$. Then $x,7k+1,7k+5\in A$ are pairwise coprime.
\end{proof}

\begin{lem}\label{lem13} Let $k$ be a nonnegative integer. If $13\in A$ and
$|A\cap \{13k+1,13k+2,\ldots,13k+12,13k+13\}|\ge 10$, then $A$
contains $4$ pairwise coprime integers.\end{lem}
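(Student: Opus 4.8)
The plan is to mimic the structure of the proof of Lemma \ref{lem7}, but with the larger modulus $13$ and the larger threshold $10$ out of $12$ consecutive integers. First I would reduce to a cleaner combinatorial statement: it suffices to show that if $|A\cap\{13k+1,\ldots,13k+12\}|\ge 9$, then $A$ already contains $3$ pairwise coprime integers; adjoining $13$ (which is coprime to every element of a block of $12$ consecutive integers not divisible by $13$, hence to all of $13k+1,\ldots,13k+12$) then yields $4$ pairwise coprime integers. As in Lemma \ref{lem7}, I would use the symmetry $13k+a\mapsto 13k+(13-a)$ to assume without loss of generality a convenient parity or divisibility normalization of the block — say that $13k+1$ is odd, so that $13k+1,13k+3,13k+5,13k+7,13k+9,13k+11$ are the odd residues and $13k+2,13k+4,\ldots,13k+12$ the even ones.

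The core of the argument is a small-prime sieve on a window of $12$ consecutive integers. Among $13k+1,\ldots,13k+12$, at most $6$ are even, at most $4$ are divisible by $3$, at most $3$ by $5$, at most $2$ by $7$, at most $2$ by $11$; and the only prime $>12$ that can divide two of them is impossible since the gap is $<13$. Three integers in this window are pairwise coprime unless they share obstructions among the primes $2,3,5,7,11$. The strategy is: from the $9$ elements of $A$ in the block, I want to extract one odd element not divisible by $3$, one element (of either parity) not divisible by $2$ or by the prime(s) already used, etc. Concretely, I would first locate an element of $A$ not divisible by $2$ or $3$ — there are at least $12 - 6 - 4 + 2 = 4$ such integers in any block of $12$ consecutive integers (inclusion–exclusion on multiples of $6$), and since $|A|\ge 9 > 12-4$, at least one lies in $A$; call it $x$. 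Removing $x$ and the at most $2$ multiples of $5$, the at most $2$ multiples of $7$, and the at most $2$ multiples of $11$ that could clash with a second well-chosen element, a counting argument of the type $9 > 12 - (\text{number of forbidden positions})$ should repeatedly force a valid pick. The bookkeeping will need to be done by cases according to which residue class mod $5$, mod $7$, mod $11$ the block $13k+1,\ldots,13k+12$ occupies, since that determines exactly which positions are blocked.

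The main obstacle I expect is that, unlike the clean proof of Lemma \ref{lem7}, naive global counting is not quite enough here: $12 - 6 - 4 - 2 - 2 - 2 < 0$, so one cannot simply delete all multiples of $2,3,5,7,11$ and hope something survives — one has to exploit that the multiples of $5$, of $7$, and of $11$ in the window are few and can be made to overlap with the multiples of $2$ and $3$, or with $x$ itself, in most residue configurations. So the real work is an organized case analysis (perhaps $5\cdot 7\cdot 11$ residue patterns, drastically collapsed by symmetry and by the observation that we only care about which of the $\le 12$ positions are "dangerous") showing that in every configuration, the $9$ chosen positions of $A$ cannot avoid all triples of pairwise coprime positions. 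I would structure this as: (i) pick $x\in A$ with $\gcd(x,30)=1$; (ii) among the remaining $\ge 8$ elements, pick $y\in A$ odd with $\gcd(y,x)=1$, which is possible because the positions forbidden for $y$ (even positions, the $\le 2$ positions sharing $5$, $7$, or $11$ with $x$) number at most $6+2<12-8$ in the worst collapsed case after using symmetry to align the small-prime multiples favorably; (iii) pick $z\in A$ with $\gcd(z,xy)=1$ similarly. Each sub-step is a one-line counting inequality once the residue configuration is fixed, so the write-up reduces to tabulating the few genuinely distinct configurations.
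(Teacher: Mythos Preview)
Your reduction (drop $13k+13$, adjoin $13$ at the end, use the reflection $a\mapsto 13-a$ to normalize parity) matches the paper exactly. The divergence is in how you extract three pairwise coprime elements from nine of the twelve remaining positions, and here your sketch does not close. Step (i) is inconsistent: you correctly argue that some $x\in A$ has $\gcd(x,6)=1$ (four such positions, $9>12-4$), but then upgrade to $\gcd(x,30)=1$, which is not guaranteed --- a window of $12$ consecutive integers can contain only three integers coprime to $30$ (e.g.\ $\{1,\ldots,12\}$), and $A$ with $|A|=9$ may avoid all of them. Step (ii) is worse: the displayed inequality ``$6+2<12-8$'' reads $8<4$, and the intended pigeonhole also fails, since after deleting $x$ there are $11$ positions, $A$ occupies at least $8$ of them, and the positions forbidden for $y$ (the $6$ even ones plus up to $2$ odd positions sharing a prime $\le 11$ with $x$) can also number exactly $8$, so $A$ may sit entirely in the forbidden set. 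The residue-by-residue analysis you propose as a fallback might eventually work, but you have not carried it out, and $5\cdot 7\cdot 11$ configurations is not ``a few''.

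The paper avoids the sieve entirely by exploiting ready-made coprime triples in the window. With $S_1$ the six odd positions and $S_2$ the six even ones, the observation that any three consecutive odd integers are pairwise coprime makes each of $\{13k+1,13k+3,13k+5\}$ and $\{13k+7,13k+9,13k+11\}$ a winning triple; this forces $|A\cap S_1|\le 4$, and together with $|A\cap S_2|\le 6$ and $|A\cap(S_1\cup S_2)|\ge 9$ one gets $|A\cap S_1|\in\{3,4\}$. The rest is a short explicit case split on which odd positions lie in $A$, repeatedly using that (a) two consecutive odd integers together with the even integer between them are pairwise coprime, and (b) three odd integers in arithmetic progression with common difference $4$ are pairwise coprime. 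These two devices reduce the problem to a handful of residual configurations, each settled by looking at $13k\bmod 3$. That concrete use of the arithmetic of short intervals --- rather than a generic small-prime count --- is the idea your proposal is missing.
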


\begin{proof} Write $S_1=\{13k+1,13k+3,13k+5,13k+7,13k+9,13k+11\}$ and $S_2=\{13k+2,13k+4,13k+6,13k+8,13k+10,13k+12\}$.
Clearly we have $|A\cap (S_1\cup S_2)|\ge 9$. Since $13$ is
coprime to all integers belong to $S_1$ and $S_2$, it is enough to
prove that $A\cap (S_1\cup S_2)$ contains $3$ pairwise coprime
integers. Without loss of generality, we assume that $2\nmid
13k+1$. Otherwise, we can replace $13k+a$ by $13k+13-a$ for
$a=1,2,\ldots,12$. We notice that $13k+1,13k+3,13k+5$ are pairwise
coprime and $13k+7,13k+9,13k+11$ are also pairwise coprime. Hence,
if $\{13k+1,13k+3,13k+5\}\subseteq A$ or
$\{13k+7,13k+9,13k+11\}\subseteq A$, then the result is true. Now
we assume that $|A\cap S_1|\le 4$. Noting that $|A\cap (S_1\cup
S_2)|\ge 9$ and $|A\cap S_2|\le 6$, we have $|A\cap S_1|\ge 3$.
Hence $|A\cap S_1|=3$ or $4$.

Case 1. $|A\cap S_1|=3$. In this case, we have~ $S_2\subseteq A$.
If there are two consecutive odd terms in $A$, then these two odd
integers and the even integer between them are pairwise coprime.
Since $13k+1,13k+5,13k+9$ are pairwise coprime and
$13k+3,13k+7,13k+11$ are also pairwise coprime, the result is true
if $\{13k+1,13k+5,13k+9\}\subseteq A$ or
$\{13k+3,13k+7,13k+11\}\subseteq A$. Hence, we only need to
consider the cases $A\cap S_1=\{13k+1,13k+5,13k+11\}$ or
$\{13k+1,13k+7,13k+11\}$.

Subcase 1.1. $A\cap S_1=\{13k+1,13k+5,13k+11\}$. If $3\nmid
13k+1$, then $13k+1,13k+4,13k+5\in A$ are pairwise coprime. If
$3\nmid 13k+5$, then $13k+1,13k+2,13k+5\in A$ are pairwise
coprime.

Subcase 1.2. $A\cap S_1=\{13k+1,13k+7,13k+11\}$. If $3\nmid
13k+7$, then $13k+7,13k+10,13k+11\in A$ are pairwise coprime. If
$3\nmid 13k+11$, then $13k+7,13k+8,13k+11$ are pairwise coprime.

Case 2. $|A\cap S_1|=4$. It follows that $|A\cap S_2|=5$.

Subcase 2.1. $13k+1\not\in A$. It follows that $A\cap S_1$ has two
pairs of consecutive odd terms. Assume that $a,a+2,b,b+2\in A$,
where $a,b\in S_1$. By $|A\cap S_2|=5$, we have $A\cap
\{a+1,b+1\}\ge 1$. Hence $a,a+1,a+2\in A$ or $b,b+1,b+2\in A$ are
pairwise coprime.

Subcase 2.2. $13k+1\in A$. By the arguments above, we may assume
that $A$ does not contain two pairs of consecutive odd terms and
$$|A\cap \{13k+1,13k+3,13k+5\}|\le 2,~|A\cap \{13k+7,13k+9,13k+11\}|\le
2,$$$$|A\cap \{13k+1,13k+5,13k+9\}|\le 2,~|A\cap
\{13k+3,13k+7,13k+11\}|\le 2.$$ Now we only need to consider the
case $$A\cap S_1=\{13k+1,13k+5,13k+7,13k+11\}.$$ If $13k+6\in A$,
then $13k+5,13k+6,13k+7$ are pairwise coprime. Hence we assume
that $13k+6\not\in A$, and so $(S_2\setminus \{13k+6\})\subseteq
A$. If $3\nmid 13k+5$, then $13k+5,13k+7,13k+8$ are pairwise
coprime. If $3\nmid 13k+7$, then $13k+4,13k+5,13k+7$ are pairwise
coprime.
\end{proof}

\section{Proof of Theorem 2 when $k = 1$}

\begin{prop} Let $A\in A_1$ and $a\in A\setminus F_1$. Then for any positive integer $n$,
$$|A(n)|\le \frac{a-1}{2a}n+\frac{3}{2}.$$\end{prop}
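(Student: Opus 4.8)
The plan is to use only two easy consequences of $A\in C_1$: that $A$ contains no two consecutive integers (since $\gcd(m,m+1)=1$), and that every $m\in A$ satisfies $\gcd(m,a)>1$ (since $a\in A$ and $a\ge 2$). The second fact says that $A(n)$ lies in the union of the $a-\varphi(a)$ residue classes mod $a$ that are not coprime to $a$; call an integer \emph{bad} if it lies in one of these classes and \emph{good} otherwise. Thus $A(n)$ is a subset of the bad integers in $[1,n]$ that contains no two consecutive integers, and the whole argument will only exploit these two features together with the hypothesis $a\notin E_1$, i.e. $a$ odd.

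Next I would analyse one period of length $a$. The bad residues partition $\mathbb Z/a\mathbb Z$ into, say, $s$ maximal cyclic runs of lengths $\ell_1,\dots,\ell_s$ with $\sum_i\ell_i=a-\varphi(a)$; since the two colours alternate around the cycle, the good residues also form exactly $s$ runs. Here the hypothesis $a\notin E_1$ enters: $a$ is odd, so $2$ is coprime to $a$, and as $1$ is coprime to $a$ as well, the residues $1$ and $2$ lie in a common good run of length $\ge 2$; hence $s\le\varphi(a)-1$. Inside any window $[(j-1)a+1,ja]$ the bad integers appear as exactly $s$ blocks, one of each length $\ell_i$ — no block straddles a multiple of $a$, because $ja+1$ has residue $1$ and is good — and a subset of a length-$\ell$ block with no two consecutive elements has size at most $\lceil\ell/2\rceil$. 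Therefore each such window contains at most
\[
\sum_{i=1}^{s}\Big\lceil\frac{\ell_i}{2}\Big\rceil=\frac12\Big(\sum_i\ell_i+t\Big)=\frac12\big(a-\varphi(a)+t\big)\le\frac12\big(a-\varphi(a)+\varphi(a)-1\big)=\frac{a-1}{2}
\]
elements of $A$, where $t\le s$ is the number of odd $\ell_i$ and $\frac{a-1}{2}\in\mathbb Z$ since $a$ is odd.

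Finally I would sum over windows. Writing $n=qa+r$ with $0\le r<a$, the $q$ disjoint windows $[1,a],\dots,[(q-1)a+1,qa]$ contribute at most $q\cdot\frac{a-1}{2}$ elements, while the tail $[qa+1,qa+r]$ contributes at most $\lceil r/2\rceil$; since $r<a$ one checks directly that $q\cdot\frac{a-1}{2}+\lceil r/2\rceil\le\frac{a-1}{2a}n+\frac32$, which is the claim. The degenerate case $a=1$ forces $A=\{1\}$ and is trivial, and very small $n$ can be checked by hand. The conceptual point — that the oddness of $a$ collapses the bad residues into at most $\varphi(a)-1$ cyclic runs, which is exactly what turns the trivial estimate $n/2$ into $\frac{a-1}{2a}n$ — is short; the only mildly delicate part of the write-up is keeping the additive term down to $\frac32$, i.e. controlling the tail interval and the placement of the bad blocks relative to multiples of $a$.
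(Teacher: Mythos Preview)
Your proof is correct, but it takes a more elaborate route than the paper's. The paper works in windows of length $2a$: it pairs up consecutive integers $\{2ka+2j-1,2ka+2j\}$ for $j=1,\dots,a$, notes that each pair contributes at most one element, and then observes directly that the single pair $\{2ka+a-2,\,2ka+a-1\}$ consists of two integers both coprime to $a$ (here is where oddness of $a$ enters, via $\gcd(a,2)=1$), so that pair contributes zero. This gives $\le a-1$ per $2a$-window in one line, and the tail is handled by writing $n=2aq_1+2q_2+r$.

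You obtain the same per-window density by a structural argument: you count the cyclic runs of ``bad'' residues, use the consecutive good pair $\{1,2\}$ to force $s\le\varphi(a)-1$, and then bound $\sum\lceil\ell_i/2\rceil$. Both arguments rest on the same arithmetic fact---a pair of consecutive residues coprime to $a$---but the paper exploits it in the most direct way possible, while yours abstracts it into a run-counting inequality. Your approach has the merit of explaining \emph{why} the oddness hypothesis buys exactly the improvement from $n/2$ to $\tfrac{a-1}{2a}n$; the paper's has the merit of being a two-line proof. The tail computations are essentially equivalent.
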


\begin{proof} Clearly, $2\nmid a$. For any nonnegative integer $k$, we have $$|A\cap
\{2ka+2j-1,2ka+2j\}|\le 1,\quad j=1,2,\ldots,a.$$ Furthermore,
since $a$, $2ka+a-2$, $2ka+a-1$ are pairwise coprime, it follows
that $|A\cap \{2ka+a-2,2ka+a-1\}|=0$. Hence, for any nonnegative
integer $k$, we have $|A\cap \{2ka+1,2ka+2,\ldots,2ka+2a\}|\le
a-1$.

Let $n=2aq_1+2q_2+r$, where $0\le q_2\le a-1$ and $0\le r\le 1$.
Then $|A(n)|\le q_1(a-1)+q_2+r$. Now it is enough to prove that
$$q_1(a-1)+q_2+r\le \frac{a-1}{2a}(2aq_1+2q_2+r)+\frac{3}{2}.$$
That is, $\frac{q_2}{a}+\frac{a+1}{2a}r\le \frac{3}{2}$. It is
clear that $$\frac{q_2}{a}+\frac{a+1}{2a}r\le
\frac{q_2}{a}+\frac{a+1}{2a}\le
\frac{a-1}{a}+\frac{a+1}{2a}=\frac{3a-1}{2a}<\frac{3}{2}.$$
Therefore, $|A(n)|\le \frac{a-1}{2a}n+\frac{3}{2}$.

\end{proof}

\section{Proof of Theorem 2 when $k = 2$}

\begin{prop} Let $A\in A_2$ and $a\in A\setminus F_2$. Then for any positive
integer $n$, $$|A(n)|\le \frac{4a-2}{6a}n+\frac{11}{3}.$$\end{prop}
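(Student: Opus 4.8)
The plan is to mimic the structure of the $k=1$ proof, but now working modulo $6a$ and using a finer ``local'' counting estimate. First I would note that since $a \notin E_2$ (i.e.\ $a$ is coprime to $6$), we have $\gcd(a,6)=1$, and we partition $\{1,2,\dots,n\}$ into blocks of length $6a$, say $B_k = 6ak + \{1,2,\dots,6a\}$ for $k=0,1,2,\dots$. The goal is to show that within each full block $|A \cap B_k| \le 4a-2$, which gives the main term $\tfrac{4a-2}{6a}n$; the additive constant $\tfrac{11}{3}$ will come from handling the (at most one) incomplete final block, exactly as the remainder term $q_2 + r$ was absorbed in the $k=1$ argument.

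To prove the per-block bound $|A \cap B_k| \le 4a-2$, I would first subdivide $B_k$ into the $a$ consecutive sub-blocks of length $6$, i.e.\ $B_k = \bigcup_{j=0}^{a-1}\bigl(6ak + 6j + \{1,\dots,6\}\bigr)$. In each length-$6$ window $\{m+1,\dots,m+6\}$ (with $m = 6ak+6j$), the three integers $m+1,m+3,m+5$ together with $m+2,m+4,m+6$ show that $A$ can contain at most $4$ of the six; more precisely, among any six consecutive integers, a set avoiding $3$ pairwise coprime integers can have at most $4$ — this is the $k=2$ analogue of ``$|A\cap\{2ka+2j-1,2ka+2j\}|\le1$.'' Summing over $j$ gives $|A\cap B_k| \le 4a$. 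To improve $4a$ to $4a-2$, I would use $a$ itself: since $a$ is odd and coprime to $3$, the three integers $a$, $6ak+3j'+\{\text{suitable shifts}\}$ can be chosen pairwise coprime with $a$, forcing two specific elements of $B_k$ to be excluded from $A$. Concretely, in the block $B_k$ one can locate two disjoint triples each of the form $\{a, x, x+1\}$ or $\{a, x, y\}$ with $x,y \in B_k$ pairwise coprime (using that $x$ ranges over a full residue system mod $6$ and mod $a$ as $j$ varies, so we can pin down residues avoiding the small primes $2,3$ and the factor $a$); each such triple, being pairwise coprime, shows the remaining element(s) cannot all lie in $A$, costing one element per triple. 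This yields the sharpened $4a-2$.

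Once the block bound is in hand, I would write $n = 6aq_1 + 6q_2 + r$ with $0\le q_2\le a-1$ and $0\le r\le 5$, so that $|A(n)| \le q_1(4a-2) + (\text{contribution of the last }6q_2+r\text{ integers})$. Bounding the tail contribution by $4q_2 + r$ (at most $4$ per length-$6$ window plus the leftover), the inequality to check reduces to
\[
q_1(4a-2) + 4q_2 + r \le \frac{4a-2}{6a}(6aq_1 + 6q_2 + r) + \frac{11}{3},
\]
i.e.\ after cancellation $\frac{2q_2}{a} + \frac{2a+2}{6a}r \le \frac{11}{3}$, which follows from $q_2 \le a-1$ and $r \le 5$ by the crude estimate $\frac{2(a-1)}{a} + \frac{5(2a+2)}{6a} < 2 + \frac{5}{3} = \frac{11}{3}$, exactly paralleling the final displayed computation in the $k=1$ proof.

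The main obstacle I anticipate is the sharpening from $4a$ to $4a-2$: establishing $|A\cap\{\text{6 consecutive integers}\}|\le 4$ is routine, but squeezing out the extra $2$ requires carefully exhibiting two genuinely independent pairwise-coprime triples inside a single block $B_k$ that each involve $a$, and verifying the coprimality constraints can be met simultaneously for all $k$ (the residues mod $2$, mod $3$, and mod $a$ of the candidate elements must all be controlled at once). I would expect to invoke something in the spirit of Lemma~\ref{lem7}/Lemma~\ref{lem13} — a local structural lemma about which configurations of consecutive integers are forced to contain pairwise coprime triples — possibly with a short case analysis on the parity of $6ak+1$ and on divisibility by $3$, mirroring the ``without loss of generality $7k+1$ is odd'' reductions used there. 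If a clean two-triple argument proves elusive, a fallback is to prove the slightly weaker $|A\cap B_k|\le 4a-1$ on all but finitely many exceptional residue patterns and absorb the deficit into the constant, but I expect the full $4a-2$ to go through.
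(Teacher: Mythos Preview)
Your overall framework is exactly the paper's: partition into blocks of length $6a$, use $|A\cap\{6j+1,\dots,6j+6\}|\le 4$ on each sub-block, improve to $4a-2$ per block, and handle the tail via $n=6aq_1+6q_2+r$ with the same final inequality (your arithmetic there is correct and matches the paper line for line).

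The genuine gap is in the sharpening step. Your mechanism --- ``locate two disjoint pairwise-coprime triples $\{a,x,y\}$, each costing one element'' --- does not yield $4a-2$. Knowing that at most one of $x,y$ lies in $A$ only improves the naive bound $6a$, not the refined bound $4a$: the sub-block containing $x,y$ can still have four elements of $A$ even after one of $x,y$ is excluded. What is actually needed is to exhibit two \emph{sub-blocks} in which the bound drops from $4$ to $3$.

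The paper does this as follows. Since $\gcd(a,6)=1$, there exist unique $n_1,n_2\in\{0,\dots,a-1\}$ with $a\mid 6n_1-1$ and $a\mid 6n_2+7$; moreover $n_1\ne n_2$ (else $a\mid 8$). For $j=n_1$: the five integers $6n_1-1,\,6n_1+1,\,6n_1+2,\,6n_1+3,\,6n_1+5$ are pairwise coprime, and since $a\mid 6n_1-1$, the element $a$ is coprime to each of $6n_1+1,6n_1+2,6n_1+3,6n_1+5$. Now if $|A\cap\{6n_1+1,\dots,6n_1+6\}|\ge 4$, then at least two of $\{6n_1+1,6n_1+2,6n_1+3,6n_1+5\}$ lie in $A$ (only $6n_1+4,6n_1+6$ are outside this quadruple), and those two together with $a$ are three pairwise coprime elements of $A$ --- contradiction. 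Hence that sub-block carries at most $3$. The sub-block at $j=n_2$ is handled symmetrically using the pairwise-coprime quintuple $6n_2+1,\,6n_2+3,\,6n_2+4,\,6n_2+5,\,6n_2+7$. This gives two sub-blocks with count $\le 3$ in every block of length $6a$, hence $4(a-2)+2\cdot 3=4a-2$.

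So the missing idea is not a Lemma~\ref{lem7}/\ref{lem13}-style case analysis but rather the specific congruence choice $6n_1\equiv 1\pmod a$ and $6n_2\equiv -7\pmod a$, which pins down exactly where $a$ interacts cleanly with the four ``coprime candidates'' in a length-$6$ window.
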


\begin{proof}There exist two integers $n_1,n_2\in \{0,1,\ldots, a-1\}$
such that $a\mid 6n_1-1,~a\mid 6n_2+7$. It is clear that
$n_1\not=n_2$. Otherwise, we have $a\mid 8$, a contradiction. Now
we shall prove that, for $i=1,2$, if $|A\cap
\{6n_i+1,6n_i+2,\ldots,6n_i+6\}|\ge 4$, then $A$ contains $3$
pairwise coprime integers.

Case 1. $a\mid 6n_1-1$. By $|A\cap
\{6n_1+1,6n_1+2,\ldots,6n_1+6\}|\ge 4$, we have $|A\cap
\{6n_1+1,6n_1+2,6n_1+3,6n_1+5\}|\ge 2$. We choose two elements
$c,d\in A\cap \{6n_2+1,6n_2+2,6n_2+3,6n_2+5\}$. Since
$6n_2-1,6n_2+1,6n_2+2,6n_2+3,6n_2+5$ are pairwise coprime, it
follows that $a,c,d\in A$ are pairwise coprime.

Case 2. $a\mid 6n_2+7$. By $|A\cap
\{6n_2+1,6n_2+2,\ldots,6n_2+6\}|\ge 4$, we have $|A\cap
\{6n_2+1,6n_2+3,6n_2+4,6n_2+5\}|\ge 2$. We choose two elements
$e,f\in A\cap \{6n_2+1,6n_2+3,6n_2+4,6n_2+5\}$. Since
$6n_2+1,6n_2+3,6n_2+4,6n_2+5,6n_2+7$ are pairwise coprime, it
follows that $a,e,f\in A$ are pairwise coprime.

Hence, by $A\in A_2$, for $i=1,2$, we have $|A\cap
\{6n_i+1,6n_i+2,\ldots,6n_i+6\}|\le 3$. Let $n=6aq_1+6q_2+r$,
where $0\le q_2<a$ and $0\le r\le 5$. Then ~$|A(n)|\le
(4a-2)q_1+4q_2+r$. Now we need to prove
$$(4a-2)q_1+4q_2+r\le \frac{4a-2}{6a}(6aq_1+6q_2+r)+\frac{11}{3}.$$
That is, $$\frac{(2a+2)r+12q_2}{6a}\le \frac{11}{3}.$$ Clearly we
have
$$\frac{(2a+2)r+12q_2}{6a}\le  \frac{5(2a+2)+12(a-1)}{6a}\le
\frac{22a}{6a}=\frac{11}{3}.$$\end{proof}

\section{Proof of Theorem 2 when $k = 3$}

\begin{prop} Let $A\in A_3$ and $a\in A\setminus F_3$. Then for any positive integer $n$, $$|A(n)|\le \frac{22a-4}{30a}n+\frac{176}{15}.$$\end{prop}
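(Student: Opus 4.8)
The plan is to mimic the structure of the $k=1$ and $k=2$ proofs, now with modulus $30a$ instead of $2a$ or $6a$. Since $a\in A\setminus F_3$ (so $a$ is coprime to $2,3,5$), for each residue block of length $30a$ we want to bound $|A(n)|$ by counting elements in blocks of length $30$. The key input is that within a suitable window of length $30$ aligned so that $a$ sits ``nearby'' in an arithmetic-progression sense, the set $A$ cannot contain too many integers, else together with $a$ we would produce $4$ pairwise coprime integers, contradicting $A\in A_3$ (i.e., $C_3$). Concretely, I would first show that since $\gcd(a,30)=1$, there are two residues $n_1,n_2\in\{0,1,\dots,a-1\}$ (distinct, by an argument like ``otherwise $a\mid$ (small number), contradiction'') for which $a$ is coprime to every integer in $30n_i+S_i$ for the two $8$-element sets $S_1=\{1,3,4,5,7,8,11,13\}$ and $S_2=\{17,19,22,23,25,26,27,29\}$ appearing in Lemma~\ref{lem55}. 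Then Lemma~\ref{lem55} says: if $|A\cap(30n_i+S_i)|\ge 5$, then $A\cap(30n_i+S_i)$ already contains $4$ pairwise coprime integers — but actually what I really need is that $a$ together with $3$ pairwise coprime members of $30n_i+S_i$ gives $4$ pairwise coprime integers; so I should extract from the proof of Lemma~\ref{lem55} (or re-derive) that $|A\cap(30n_i+S_i)|\ge 5$ forces $3$ pairwise coprime elements there, each coprime to $a$.

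Next, I would combine this with Lemmas~\ref{lem7} and \ref{lem13}. These handle the cases where $a$ is divisible by $7$ or $13$ — in those cases the simple ``$30$-block'' bound is not by itself enough, and one needs the extra constraint that a full residue system mod $7$ (resp.\ mod $13$) intersected with $A$ is restricted. So the argument splits: (i) if $7\nmid a$ and $13\nmid a$, then in each block $30an + \{1,\dots,30a\}$, partitioned into $a$ sub-blocks of length $30$, I claim $|A\cap(\text{sub-block})|\le$ some bound summing to at most $(22a-4)$ per period; here the $-4$ comes precisely from the two special sub-blocks indexed by $n_1,n_2$ where Lemma~\ref{lem55} cuts the count from a generic value down by $2$ each (mirroring the $4a-2$ in the $k=2$ case where two blocks lose $1$ each, and $a-1$ in the $k=1$ case); (ii) if $7\mid a$ or $13\mid a$, invoke Lemma~\ref{lem7}/\ref{lem13} to handle the relevant additional progressions and check the count still comes out to $22a-4$.

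Then, writing $n = 30aq_1 + 30q_2 + r$ with $0\le q_2 < a$ and $0\le r\le 29$, we get $|A(n)|\le (22a-4)q_1 + cq_2 + r$ for the appropriate constant $c$ (the generic per-$30$-block bound, which should be $24$ since a generic block of $30$ consecutive integers has at most $24$ that avoid being ``too coprime'' — compare $4$ out of $6$ in the $k=2$ case, $1$ out of $2$ in the $k=1$ case). The final inequality to verify is then
\[
(22a-4)q_1 + 24q_2 + r \le \frac{22a-4}{30a}(30aq_1+30q_2+r) + \frac{176}{15},
\]
which reduces to $\frac{(\text{linear in }a)\,r + 30\cdot(\text{something})\,q_2}{30a}\le \frac{176}{15}$, checked by plugging in the extremal values $r=29$, $q_2=a-1$, exactly as in the $k=1,2$ cases. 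I expect the main obstacle to be step (i)–(ii): getting the per-period count to come out to \emph{exactly} $22a-4$ requires a careful case analysis of which of the two $8$-element windows $S_1,S_2$ actually contain $a$'s ``coprime partners,'' handling the overlap/edge effects at the boundary of the length-$30$ sub-blocks, and correctly merging the $7\mid a$ and $13\mid a$ cases via Lemmas~\ref{lem7} and \ref{lem13} — this bookkeeping is where the constant $\frac{176}{15}$ (as opposed to $\frac{11}{3}$ or $\frac{3}{2}$) and the precise coefficient $22a-4$ get pinned down, and it is easy to be off by a small additive amount if the special sub-blocks are miscounted.
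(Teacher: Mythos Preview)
Your overall architecture is right --- period $30a$, partition into $a$ sub-blocks of length $30$, distinguish special sub-blocks where $a$ interacts with the block, then write $n=30aq_1+30q_2+r$ and check a final inequality --- but several of the specific numbers are wrong in ways that break the argument.

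First, the generic per-$30$-block bound is $22$, not $24$: Lemma~\ref{lem55} already forces $|A\cap(30k+S_j)|\le 4$ for $j=1,2$, and the remaining $14$ residues mod $30$ contribute at most $14$, giving $22$. So Lemma~\ref{lem55} is what supplies the \emph{generic} bound, not the special reduction. With your $24q_2$, the final inequality reduces to a quantity of order $2q_2\approx 2(a-1)$, which cannot be bounded by any constant.

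Second, to reach $22a-4$ per period you need \emph{four} special sub-blocks, each reduced from $22$ to $21$, not two reduced by $2$. The paper takes $n_1,n_2,n_3,n_4\in\{0,\dots,a-1\}$ with $a\mid 30n_1-1$, $a\mid 30n_2+31$, $a\mid 30n_3-11$, $a\mid 30n_4+41$, and for each shows that if the block meets $A$ in $22$ elements (so both $S_1$- and $S_2$-intersections are exactly $4$, and hence the $14$ ``automatic'' residues are all in $A$) then, using the particular divisibility $a\mid 30n_i+c$, one can always exhibit four pairwise coprime elements of $A$. This is not a one-line consequence of Lemma~\ref{lem55}; it is a lengthy case analysis occupying most of the proof.

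Third, the role of Lemmas~\ref{lem7} and~\ref{lem13} is not what you describe. They do not handle ``$7\mid a$'' or ``$13\mid a$'' within the $30$-block scheme; rather, the cases $a=7$ and $a=13$ are treated \emph{entirely separately}, using blocks of length $7$ and $13$ (the resulting densities $5/7$ and $9/13$ are each at most $(22a-4)/(30a)$). The reason is that for $a=7$ the four residues $n_1,\dots,n_4$ collapse to two, and for $a=13$ two of them coincide, so the main argument cannot produce four distinct special sub-blocks. For every other $a$ with $(a,30)=1$ the four residues are distinct and the $30$-block argument runs regardless of whether $7$ or $13$ divides $a$.

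With the correct numbers the endgame is $|A(n)|\le(22a-4)q_1+22q_2+r$, and the inequality to verify becomes $\dfrac{4q_2}{a}+\dfrac{(8a+4)r}{30a}\le\dfrac{176}{15}$, which holds at $q_2=a-1$, $r=29$.
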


\begin{proof} Since $a\not\in F_3$, it follows that $(a,30)=1$.

We first consider the case $a=7$. Let $n=7s+r,~0\le r\le 6$. Then,
for any nonnegative integer $k$, by Lemma \ref{lem7}, we have
$|A\cap\{7k+1,7k+2,\ldots,7k+7\}|\le 5$. Hence $$|A(n)|\le
5s+r=\frac{22\cdot 7-4}{30\cdot 7}\cdot
7s+r=\frac{22a-4}{30a}(7s+r)+\frac{8a+4}{30a}r\le
\frac{22a-4}{30a}n+\frac{176}{15}.$$

Next we consider the case $a=13$. Let $n=13s+r,~0\le r\le 12$. For
any positive integer $k$, by Lemma \ref{lem13}, we have
$|A\cap\{13k+1,13k+2,\ldots,13k+13\}|\le 9$. It also follows from
Lemma \ref{lem13} that $|A\cap \{1,2,\ldots,13\}|\le 10$. Hence
$|A(n)|\le 9s+r\le \frac{22\times 13-4}{30\times 13}\cdot
(13s)+r=\frac{22a-4}{30a}(13s+r)+\frac{8a+4}{30a}r\le
\frac{22a-4}{30a}n+\frac{176}{15}.$

Now we may assume that $a\not=7,13$. Then there exist four
distinct integers $n_1,n_2,n_3,n_4\in \{0,1,\ldots, a-1\}$ such
that $a\mid 30n_1-1,~a\mid 30n_2+31,~a\mid 30n_3-11,~a\mid
30n_4+41$. Now we shall prove that, for $i=1,2,3,4$, if $|A\cap
\{30n_i+1,30n_i+2,\ldots,30n_i+30\}|=22$, then $A$ contains $4$
pairwise coprime integers. By Lemma \ref{lem55}, it suffices to
prove the case that $$|(\mathbb{Z}^{+}\setminus A)\cap (30k+
\{1,3,4,5,7,8,11,13\})|=4,$$
$$|(\mathbb{Z}^{+}\setminus A)\cap
(30k+\{17,19,22,23,25,26,27,29\})|=4.$$ In this case, we have
$$(30k+\{2,6,9,10,12,14,15\})\subseteq A,$$
$$(30k+\{16,18,20,21,24,28,30\})\subseteq A.$$

Case 1. $a\mid 30n_1-1$.

Write $M:=30n_1$. If $A\cap \{M+4,M+8\}=\emptyset$, then $|A\cap
(M+\{1,3,5,7,11,13\})|= 4$, and so $A$ contains $4$ pairwise
coprime integers.

If $M+4\in A$ and $M+8\not\in A$, then $M+11\in A$ and $7\mid
M+4$. Otherwise, $M+4$ and three integers in $A\cap (M+
\{1,3,5,7,11,13\})$ are pairwise coprime. By $7\mid M+4$, it
follows that $7\nmid M-1$. Hence $a$ and three integers in $A\cap
(M+\{1,3,5,7,11,13\})$ are pairwise coprime.

If $M+8\in A$ and $M+4\not\in A$, then $M+1\in A$ and $7\mid M+8$.
Otherwise, $M+8$ and three integers in $A\cap (M+
\{1,3,5,7,11,13\})$ are pairwise coprime. Now we have $7\nmid
M-1$. Hence $a$ and three integers in $A\cap
(M+\{1,3,5,7,11,13\})$ are pairwise coprime.

Now we suppose that $M+4\in A$ and $M+8\in A$. If $M+13\not\in A$,
then two integers in $A\cap (M+\{1,3,5,7,11\})$, $a$ and one
integer in $\{M+4,M+8\}$ which is not a multiple of $7$ are
pairwise coprime. Hence $M+13\in A$.

If there exists an integer $b\in \{M+1,M+5,M+7\}$ and $b\in A$,
then $b,M+4,M+9,M+13$ are pairwise coprime.

If $M+11\in A$, then $M+8,M+9,M+11,M+13$ are pairwise coprime.

Therefore $$A\cap (M+\{1,3,4,5,7,8,11,13\})=(M+\{3,4,8,13\}).$$

If $7\nmid a$, then $a,M+3,M+13$ and the integer in $\{M+4,M+8\}$
which is not a multiple of $7$ are pairwise coprime. Hence now we
assume that $7\mid a$.

Since $|(\mathbb{Z}^{+}\setminus A)\cap (30k+
\{17,19,22,23,25,26,27,29\})|=4$, it follows that
$|(M+\{17,19,23,25,29\})\cap A|\ge 1.$

If $M+17\in A$, then $a,M+3,M+8,M+17\in A$ are pairwise coprime.

If $M+19\in A$, then $a,M+3,M+4,M+19\in A$ are pairwise coprime.

If $M+23\in A$, then $a,M+3,M+8,M+23\in A$ are pairwise coprime.

If $M+25\in A$, then $M+4,M+9,M+13,M+25\in A$ are pairwise
coprime.

If $M+29\in A$, then $a,M+4,M+9,M+29\in A$ are pairwise coprime.

Hence, $|A\cap \{M+1,M+2,\ldots,M+30\}|\le 21$.

Case 2. $a\mid 30n_2+31$.

This case is nearly the same as Case 1, we only need to replace
$30n_1+i$ by $30n_2+30-i$ for $i=1,2,\ldots,29$ and replace
$30n_1+30$ by $30n_2+30$. The proofs are the same.

Case 3. $a\mid 30n_3-11$.

Write $M:=30n_3$. If $A\cap \{M+4,M+8\}=\emptyset$, then $|A\cap
(M+\{1,3,5,7,11,13\})|= 4$, and so $A$ contains $4$ pairwise
coprime integers.

If $M+4\in A$ and $M+8\not\in A$, then $M+11\in A$ and $7\mid
M+4$. Otherwise, $M+4$ and three integers in $A\cap (M+
\{1,3,5,7,11,13\})$ are pairwise coprime. By $7\mid M+4$, we have
$7\nmid M-11$. Hence $a$ and three integers in $A\cap
(M+\{1,3,5,7,11,13\})$ are pairwise coprime.

If $M+8\in A$ and $M+4\not\in A$, then $M+1\in A$ and $7\mid M+8$.
Otherwise, $M+8$ and three integers in $A\cap (M+
\{3,5,1,7,11,13\})$ are pairwise coprime. By $7\mid M+8$, we have
$7\nmid M-11$. If $M+11\not\in A$, then $a$ and three integers in
$A\cap (M+\{1,3,5,7,13\})$ are pairwise coprime. Now we assume
that $M+11\in A$. If $M+3\not\in A$, then $M+9$ and $A\cap
(M+\{1,5,7,11,13\})$ are pairwise coprime. If $M+3\in A$, then
$M+1,M+2,M+3,M+11\in A$ are pairwise coprime.

Now we suppose that $M+4\in A$ and $M+8\in A$. If $M+11\not\in A$,
choose an integer $b\in A\cap (M+\{1,5,7,13\})$, then
$a,b,M+4,M+9\in A$ are pairwise coprime. Next we assume that
$M+11\in A$.

If there exists an integer $b\in A$ and $b\in \{M+5,M+7,M+13\}$,
then $b,M+8,M+9,M+11$ are pairwise coprime.

If $M+1\in A$, then $M+1,M+9,M+10,M+11\in A$ are pairwise coprime.

If $M+3\in A$ and $11\nmid a$, then $a,M+9,M+11,M+14$ are pairwise
coprime.

Finally we assume that $M+3\in A$ and $11\mid a$.

Subcase 1. $M+22\in A$ and $M+26\not\in A$.

If $M+29\not\in A$, then $M+22$ and three integers in $A\cap
(M+\{17,19,23,25,27\})$ are pairwise coprime.

If $M+29\in A$, then $M+11,M+9,M+14,M+29\in A$ are pairwise
coprime.

Subcase 2. $M+22\not\in A$ and $M+26\in A$.

If $M+19\not\in A$, then $M+26$ and three integers in $A\cap
(M+\{17,23,25,27,29\})$ are pairwise coprime.

If $M+19\in A$, then $a,M+19,M+9,M+4\in A$ are pairwise coprime.

Subcase 3. $M+22\in A$ and $M+26\in A$.

Since $|(\mathbb{Z}^{+}\setminus A)\cap (M+
\{17,19,22,23,25,26,27,29\})|=4$, we have $$|A\cap
(M+\{17,19,23,25,29\})\ge 1.$$

If $M+17\in A$, then $M+8,M+9,M+11,M+17\in A$ are pairwise
coprime.

If $M+19\in A$, then $a,M+4,M+9,M+19\in A$ are pairwise coprime.

If $M+23\in A$, then $M+3,M+8,M+11,M+23\in A$ are pairwise
coprime.

If $M+25\in A$,  by $11\mid M-11$, then $11\nmid M+14$ and
$a,M+9,M+14,M+25\in A$ are pairwise coprime.

If $M+29\in A$, then $M+9,M+11,M+14,M+29\in A$ are pairwise
coprime.

Hence, $|A\cap \{M+1,M+2,\ldots,M+30\}|\le 21$.

Subcase 4. $M+22\notin A$ and $M+26\notin A$. It follows that $|A\cap
(M+\{17,19,23,25,27,29\})|= 4$, and so $A$ contains $4$ pairwise
coprime integers.

Case 4. $a\mid 30n_4+41$.

This case is nearly the same as Subcase 2, we only need to replace
$30n_3+i$ by $30n_4+30-i$ for $i=1,2,\ldots,29$ and replace
$30n_3+30$ by $30n_4+30$. The proofs are the same.

Therefore, for any positive integer $q$, we have $|A(n)\cap
\{1,2,\ldots,30aq\}|\le (22a-4)q$ and $|A(n)\cap
\{30q+1,30q+2,\ldots,30q+30\}|\le 22$.

Now let $n=30aq_1+30q_2+r,~0\le r<30,~0\le q_2\le a-1$. Then
$|A(n)|\le (22a-4)q_1+22q_2+r$, where $0\le r<30$ and $0\le q_2\le
a-1$. Hence it suffices to prove that
\begin{eqnarray*}(22a-4)q_1+22q_2+r\le
\frac{22a-4}{30a}(30aq_1+30q_2+r)+\frac{176}{15}.
\end{eqnarray*}
That is, $\frac{4q_2}{a}+r-\frac{22a-4}{30a}r\le \frac{176}{15}$.
It is clear that $$\frac{4q_2}{a}+r-\frac{22a-4}{30a}r\le
\frac{4(a-1)}{a}+29-\frac{22a-4}{30a}\times 29<4+29-\frac{22\times
29}{30}=\frac{176}{15}.$$
\end{proof}

\section{Proof of Theorem 3}

In the first step we prove the first part of Theorem 3. Let $A\in
C_1(n)$, $A\not\subseteq E_1(n)$ and let $a$ be the smallest odd
integer of $A$. We may assume that $a$ is squarefree, i.e., $a =
q_{1} q_2\cdots q_{t}$, where $3 \le q_{1} < \dots{} < q_{t}$ are
primes. It is clear that two consecutive integers are coprime, and
 the conditions $(m,a) = (m+1,a) = 1$ means that $m$ satisfies the following linear congruences
$m \equiv a_{i} \bmod{q_{i}}$ and $m + 1 \equiv a_{i}+1
\bmod{q_{i}}$, where $1 \le a_{i} \le q_{i} - 2$. The number of
such congruences is $ \prod_{i=1}^{t}(q_{i} - 2).$

 Thus we have
\begin{eqnarray*}
&&|\{m: 1 \le m \le a, (m,a) = (m+1,a)= 1\}| = \prod_{i=1}^{t}(q_{i} - 2)\\
&= &a \cdot \prod_{i=1}^{t}\Big(1 -
\frac{1}{q_{i}}\Big)^{2}\cdot\prod_{i=1}^{t}\frac{1-2/q_{i}}{(1-1/q_{i})^{2}}
= a\cdot \prod_{i=1}^{t}\Big(1-\frac{1}{q_{i}}\Big)^{2} \cdot \prod_{i=1}^{t}\frac{q^{2}_{i}-2q_{i}}{q_{i}^2-2q_{i}+1}\\
&\ge& a\cdot \prod_{i=1}^{t} \Big(1-\frac{1}{q_{i}}\Big)^{2} \cdot
\prod_{p \ge 3}\Big(1 - \frac{1}{(p-1)^{2}}\Big) \gg  a \cdot
\prod_{i=1}^{t}\Big(1-\frac{1}{q_{i}}\Big)^{2}\\
&=& a \cdot \frac{\varphi(a)^{2}}{a^2} \gg \frac{1}{a} \cdot
\Big(\frac{a}{\log \log a}\Big)^{2} = \frac{a}{(\log\log a)^{2}}.
\end{eqnarray*}
It follows that
\begin{eqnarray*}
&&|\{m: 1 \le m \le n, (m,a) = (m+1,a) = 1\}| \\
&\ge& |\{m: 1 \le m \le \Big[\frac{n}{a}\Big]a, (m,a) = (m+1,a) = 1\}| \\
&\gg& \frac{a}{(\log\log a)^{2}} \cdot \Big[\frac{n}{a}\Big] \gg \frac{n}{(\log\log n)^{2}}.
\end{eqnarray*}

We assume that there are $v$ even integers and $w$ odd integers in
the set $$\{m: 1 \le m \le n, (m,a) = (m+1,a) = 1\}.$$ Obviously,
 $v \gg \frac{n}{(\log\log n)^{2}}$
or $w \gg \frac{n}{(\log\log n)^{2}}$.

If
 $v \gg \frac{n}{(\log\log n)^{2}}$, then we may choose in $A$ at most one integer from each pair of $(1,2), \dots{}, (2[n/2]+1, 2[n/2] + 2)$.
Thus we have
\begin{eqnarray*}
|A|= \sum_{k=1}^{\lfloor \frac{n}{2}\rfloor+1}|A\cap
\{2k-1,2k\}|\le \left\lfloor \frac{n}{2}\right\rfloor+1-v\le
\frac{n}{2} - c_{1}\frac{n}{(\log\log n)^{2}},
\end{eqnarray*}
where $c_1$ is an absolute positive constant.

If $w \gg \frac{n}{(\log\log n)^{2}}$, then we may chose in $A$ at
most one integers from each pair of $(2,3), \dots{}, (2[n/2],
2[n/2]+1)$. Thus we have
\begin{eqnarray*}
|A|\le 1+\sum_{k=1}^{\lfloor \frac{n}{2}\rfloor}|A\cap
\{2k,2k+1\}|\le 1+\left\lfloor \frac{n}{2}\right\rfloor-w\le
\frac{n}{2} - c_{2}\frac{n}{(\log\log n)^{2}},
\end{eqnarray*}
where $c_2$ is also an absolute positive constant.

This completes the proof of the first part of Theorem 3.

In the next step we prove the second part of Theorem 3. First we
need three lemmas in the following.

\begin{lem} If $A \in C_{2}(n)$, then for every positive integer $k$ we have
\[
|\{6k, 6k + 1, 6k + 2, 6k + 3, 6k + 4, 6k + 5\} \cap A| \le 4.
\]
\end{lem}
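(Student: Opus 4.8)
The plan is to prove the slightly stronger statement that $|A\cap\{6k+1,6k+2,6k+3,6k+4,6k+5\}|\le 3$ whenever $A\in C_2(n)$; the lemma then follows immediately, since the full window $\{6k,6k+1,\dots,6k+5\}$ contains only one further integer, namely $6k$.

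The starting point is to record the coprimality pattern inside the block $\{6k+1,\dots,6k+5\}$. First, the three odd members $6k+1$, $6k+3$, $6k+5$ are pairwise coprime: any two of them differ by $2$ or $4$, so a common divisor would have to divide $4$, while all three are odd. Second, each of the even members $6k+2$, $6k+4$ is coprime to each of $6k+1$, $6k+3$, $6k+5$: such a pair differs by $1$ or $3$, so a common divisor divides $3$, and it cannot be $3$ because $6k+2\equiv 2$ and $6k+4\equiv 1\pmod 3$ are not multiples of $3$. (The only non-coprime pair in the block is $\{6k+2,6k+4\}$, sharing the factor $2$, but this will not be needed.)

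Now suppose, for contradiction, that $A$ contains at least $4$ of the five integers $6k+1,\dots,6k+5$. Of these five, three are odd and two are even, so by the pigeonhole principle $A$ contains at least two of the odd ones, say $x$ and $y$, and at least one of the even ones, say $z$. By the observations above, $x$ and $y$ are coprime, and $z$ is coprime to both $x$ and $y$; hence $\{x,y,z\}$ is a set of three pairwise coprime integers in $A$, contradicting $A\in C_2(n)$. Therefore $|A\cap\{6k+1,\dots,6k+5\}|\le 3$, and consequently $|A\cap\{6k,6k+1,\dots,6k+5\}|\le 4$, as claimed.

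Since the argument is a short finite verification, there is no serious obstacle here; the only point that needs a little care is the coprimality bookkeeping modulo $3$ for the even members, and the device that avoids an explicit five-case split is the pigeonhole step isolating two odd and one even element.
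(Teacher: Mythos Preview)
Your proof is correct, and in fact establishes the slightly stronger bound $|A\cap\{6k+1,\dots,6k+5\}|\le 3$, from which the lemma follows. The paper's own proof argues by contradiction from $|A\cap\{6k,\dots,6k+5\}|\ge 5$ and splits into cases according to whether $6k+3\in A$: if so, one of the consecutive triples $\{6k+1,6k+2,6k+3\}$ or $\{6k+3,6k+4,6k+5\}$ lies in $A$; if not, then $6k+1,6k+2,6k+5\in A$ are pairwise coprime. Your argument replaces this case split by first tabulating the full coprimality pattern among $6k+1,\dots,6k+5$ (every odd--odd and every odd--even pair is coprime) and then invoking pigeonhole to extract two odd and one even element from any four. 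This is a bit more systematic, avoids the explicit case analysis, and yields a sharper statement at no extra cost.
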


\begin{proof} Assume contrary that $|\{6k, 6k + 1, 6k + 2, 6k + 3, 6k + 4, 6k + 5\} \cap A| \ge 5$. If $6k + 1, 6k + 2, 6k + 3 \in A$, then
$6k + 1, 6k + 2, 6k + 3$ are pairwise coprime, a contradiction. If
$6k + 3, 6k + 4, 6k + 5 \in A$, then $6k + 1, 6k + 2, 6k + 3$ are
also pairwise coprime, a contradiction again. Now we may assume
$6k + 3 \notin A$. Then we have $6k + 1, 6k + 2, 6k + 5 \in A$,
but $6k + 1, 6k + 2, 6k + 5$ are obviously pairwise coprime, which
is absurd.

The proof of the Lemma is completed.
\end{proof}

\begin{lem}\label{lem66} Let $a$ be a positive integer with $(a,6)=1$ and
$a\ge \prod_{i=3}^{12}p_i$. Then we have
\begin{eqnarray*}
|\{m: 1 \le m \le a, 6\mid m~\text{and}~ (m+i,a) = 1 ~\text{for}~i=0,1,2,3,5\}| \gg \frac{a}{(\log\log a)^{5}}.
\end{eqnarray*}
\end{lem}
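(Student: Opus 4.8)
The plan is to estimate the count in Lemma \ref{lem66} by a sieve-type argument analogous to the $k=1$ case already carried out in the proof of the first part of Theorem 3, but now tracking a window of five shifted residues instead of two. First I would reduce to a local count modulo $a$: the integers $m$ with $1\le m\le a$, $6\mid m$, and $(m+i,a)=1$ for $i\in\{0,1,2,3,5\}$ are exactly those $m$ satisfying a prescribed residue class mod $6$ together with, for each prime $q\mid a$, the condition that $m$ avoids the five residues $\{0,-1,-2,-3,-5\}\pmod q$. Writing $a=q_1\cdots q_t$ with the $q_i\ge 3$ distinct primes (we may again assume $a$ squarefree, since passing to the squarefree kernel only helps), the number of admissible residues mod $q_i$ is $q_i-\nu(q_i)$, where $\nu(q_i)=|\{0,-1,-2,-3,-5\}\bmod q_i|$. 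For $q_i\ge 7$ the five shifts are distinct mod $q_i$, so $\nu(q_i)=5$; for $q_i=3$ we get $\nu=2$ and for $q_i=5$ we get $\nu=4$. Since $a\ge\prod_{i=3}^{12}p_i$ forces $a$ to have at least ten prime factors, the small-prime discrepancies contribute only a bounded multiplicative constant and can be absorbed.

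Next I would turn the local count into the lower bound. By the Chinese Remainder Theorem the number of valid residues mod $\mathrm{lcm}(6,a)=6a/\gcd(6,a)$... more simply, since $(a,6)=1$, the count over a full period $6a$ is $\prod_{q_i\mid a}(q_i-\nu(q_i))$ times a fixed positive factor coming from the mod-$6$ constraint, hence over $[1,a]$ (one copy of the period $a$ in the $a$-aspect, with the mod-6 condition thinning by a constant) it is $\gg\prod_{q_i\mid a}(q_i-5)\cdot\big(\text{bounded correction for }q_i\in\{3,5\}\big)$. Then I estimate
\[
\prod_{q_i\mid a}(q_i-5)=a\prod_{q_i\mid a}\Big(1-\frac1{q_i}\Big)^5\cdot\prod_{q_i\mid a}\frac{1-5/q_i}{(1-1/q_i)^5},
\]
and the second product is $\gg\prod_{p\ge 7}\big(1-\tfrac{1}{(p-1)}\cdot\text{(stuff)}\big)$ — more precisely $\prod_{p\ge 7}\frac{(p-5)p^4}{(p-1)^5}$, which converges to a positive constant since the general term is $1-O(1/p^2)$. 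Combining, the count is $\gg a\cdot(\varphi(a)/a)^5\gg a\cdot\big(1/\log\log a\big)^5=a/(\log\log a)^5$, using Mertens' theorem in the form $\varphi(a)/a\gg 1/\log\log a$ exactly as in the displayed chain for the $k=1$ proof.

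The main technical point to be careful about — and the step I'd expect to need the most attention — is the handling of the small primes $3$ and $5$ and verifying that the mod-$6$ constraint is genuinely compatible with (not in conflict with) the coprimality constraints: one must check that there actually exists a residue $m\pmod 6$ with $6\mid m$ that does not immediately force one of $m+i$ ($i\in\{0,1,2,3,5\}$) to share the factor $3$ with $a$. Here $m\equiv 0\pmod 3$ would put $m$ itself in a forbidden class mod $3$ (if $3\mid a$), so the hypothesis forces us to be in the case $3\nmid a$, or else the window's $3$-adic pattern must be arranged — this is precisely why the lemma is stated for $a$ with at least ten prime factors and why the $3,5$ contributions need separate bookkeeping rather than the uniform $q_i-5$ count. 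Once that compatibility and the convergence of the Euler product are pinned down, the rest is the same Mertens-type estimate already used in the paper, applied five times instead of twice. Finally, as in the $k=1$ argument, one extends from $[1,a]$ to $[1,n]$ by summing over the $\lfloor n/a\rfloor$ complete blocks, losing only a bounded factor and replacing $\log\log a$ by $\log\log n$.
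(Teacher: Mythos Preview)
Your overall strategy---CRT to reduce to a local count, then the Euler-product estimate $\prod_{q\mid a}(q-5)\gg a(\varphi(a)/a)^5\gg a/(\log\log a)^5$ via Mertens---is exactly the paper's, and that part of the calculation is fine. But there is a real gap at the step ``hence over $[1,a]$ \ldots with the mod-$6$ condition thinning by a constant.'' The combined constraints are periodic with period $6a$, not $a$; over a full period $[1,6a]$ the CRT count is exactly $\prod_{q\mid a}(q-\nu(q))$, but over $[1,a]$ you are asking how many elements of the good set $R\subseteq[1,a]$ (defined only by congruences mod $a$) happen to lie in the residue class $0\pmod 6$. Since $(a,6)=1$, there is no algebraic link between these two conditions, and in small examples the intersection can be empty: for $a=35$ one computes $R=\{1,31\}$, both $\equiv 1\pmod 6$. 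Averaging shows that \emph{some} translate $[ja+1,(j+1)a]$ contains at least $|R|/6$ good multiples of $6$, but not necessarily $j=0$.

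The paper fills this gap by peeling off the largest prime factor $q_t$: for each admissible residue $s$ mod $a/q_t$, the $q_t$ lifts $s+l\cdot(a/q_t)$ form an arithmetic progression with step coprime to $6$, so at least $\lfloor q_t/6\rfloor$ of them are divisible by $6$, and at most $5$ of those are lost to the coprimality condition mod $q_t$. This is precisely where the hypothesis $a\ge\prod_{i=3}^{12}p_i$ is used---not to force ten prime factors (it does not: $a$ could be a single large prime, and the paper treats $t=1$ separately), but to guarantee $q_t\ge 37$ so that $\lfloor q_t/6\rfloor-5\ge 1$. You have misread both hypotheses: $(a,6)=1$ already excludes $q_i=3$, so there is no compatibility issue between $6\mid m$ and $(m,a)=1$ to worry about; and the size lower bound on $a$ is what makes the mod-$6$ thinning provably bounded, via the mechanism above, rather than being a bookkeeping condition on small primes.
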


\begin{proof}
Let $a = q_{1}q_2 \cdots{} q_{t}$, where $5 \le q_{1} < q_{2} <
\cdots{} < q_{t}$ are primes. Suppose that $t = 1$. Take $m = 6,
12, \dots{}, 6\Big(\Big[\frac{a}{6}\Big] - 1\Big)$. Then we have
$(m+i,a) = 1 ~\text{for}~i=0,1,2,3,5$, and so
\[
|\{m: 1 \le m \le a, 6\mid m ~\text{and}~ (m+i,a) = 1 ~\text{for}~i=0,1,2,3,5\}| \ge \Big[\frac{a}{6}\Big] - 1 \gg \frac{a}{(\log\log a)^{5}}.
\]

Now we assume that $t \ge 2$. Define the set $S$ by
\[
S = \left\{s: 1 \le s \le \frac{a}{q_{t}}, \left(s+i, \frac{a}{q_{t}}\right) = 1 ~\text{for}~i=0,1,2,3,5\right\}
\]
Then we have
\begin{eqnarray*}
&&\{m: 1 \le m \le a, (m+i,a) = 1 ~\text{for}~i=0,1,2,3,5\}\\
&=& \bigcup_{s \in S}\left\{s + l\cdot \frac{a}{q_{t}}: 0 \le l
\le q_{t} - 1, (s + l\cdot \frac{a}{q_{t}}+i,q_t) = 1
~\text{for}~i=0,1,2,3,5 \right\}: = \bigcup_{s \in S}S_{s}.
\end{eqnarray*}
It is easy to see that, for a fixed $s \in S$ and $i\in
\{0,1,2,3,5\}$, the integers $s + l\cdot \frac{a}{q_{t}}+i~(0 \le
l \le q_{t} - 1)$ form a complete residue system modulo $q_{t}$.
Since $q_{t} \ge 7$, we have $|S_{s}| = q_{t} - 5$ for every $s
\in S$. It follows from $(\frac{a}{q_{t}}, 6) = 1$ that
\[
\left|\left\{l: 0 \le l \le q_{t} - 1, 6\mid s + l\cdot \frac{a}{q_{t}}\right\}\right| \ge \Big[\frac{q_{t}}{6}\Big].
\]
Then we have
\[
\left|\left\{l: 0 \le l \le q_{t} - 1, 6\mid s + l\cdot \frac{a}{q_{t}}~\text{and}~  \left(s + l\cdot \frac{a}{q_{t}}+i,a\right) = 1~\text{for}~i=0,1,2,3,5
\right\}\right| \ge \Big[\frac{q_{t}}{6}\Big] - 5.
\]
It follows that
\begin{eqnarray*}
&&\left|\left\{l: 0 \le l \le q_{t} - 1, 6\mid s + l\cdot \frac{a}{q_{t}}~\text{and}~  \left(s + l\cdot \frac{a}{q_{t}}+i,a\right) = 1~\text{for}~i=0,1,2,3,5
\right\}\right|\\
&\ge& \frac{\Big[\frac{q_{t}}{6}\Big] - 5}{q_{t} - 5} \cdot
\left|\left\{l: 0 \le l \le q_{t} - 1~\text{and}~ \left(s + l\cdot
\frac{a}{q_{t}}+i,a\right) = 1~\text{for}~i=0,1,2,3,5
\right\}\right|.
\end{eqnarray*}
Then we obtain
\begin{eqnarray*}
&&|\{m: 1 \le m \le a, 6\mid m ~\text{and}~ (m+i,a) = 1 ~\text{for}~i=0,1,2,3,5\}|\\
&\ge& \frac{\Big[\frac{q_{t}}{6}\Big] - 5}{q_{t} - 5} \cdot |\{m: 1 \le m \le a ~\text{and}~ (m+i,a) = 1 ~\text{for}~i=0,1,2,3,5\}|.
\end{eqnarray*}
If $q_{1} = 5$, then $(m,a) = (m+1,a) = (m+2,a) = (m+3,a) =
(m+5,a) = 1$ means that $m$ satisfies the following linear
congruences $m \equiv 1 \bmod{5}$ and $m \equiv a_{i}
\bmod{q_{i}}$, where $a_{i}\in \{1, 2, \dots{}, q_{t} - 6,q_{t} -
4\}$ for $i=2,3,\ldots,t$. The number of such congruences is
$\prod_{i=2}^{t}(q_{i} - 5)$. Hence we have
\begin{eqnarray*}
&&|\{m: 1 \le m \le a ~\text{and}~ (m+i,a) = 1 ~\text{for}~i=0,1,2,3,5\}| = \prod_{i=2}^{t}(q_{i} - 5)\\
&=& \frac{1}{5}\left(\prod_{i=1}^{t}q_{i}\right)\left(\prod_{i=2}^{t}\frac{1-\frac{5}{q_{i}}}{(1-\frac{1}{q_{i}})^{5}}\right)\left(\prod_{i=1}^{t}\left(1 - \frac{1}{q_{i}}\right)^{5}\right)\frac{1}{(1-\frac{1}{5})^{5}}\\
&\ge& \frac{\prod_{p \ge 7}\frac{(1-\frac{5}{p})}{(1-\frac{1}{p})^{5}}}{5(1-\frac{1}{5})^{5}}\cdot a \cdot
\frac{\varphi(a)^{5}}{a^{5}} \gg \frac{a}{(\log\log a)^{5}}.
\end{eqnarray*}

If $q_{1} \ge 7$, the proof is the similar. For the sake of
completeness, we present it here.

If $q_{1} \ge 7$, then $(m,a) = (m+1,a) = (m+2,a) = (m+3,a) =
(m+5,a) = 1$ means that $m$ satisfies the following linear
congruences $m \equiv a_{i} \bmod{q_{i}}$, where $a_{i}\in \{ 1,
2, \dots{}, q_{t} - 6,q_{t} - 4\}$ for $i=1,2\ldots,t$. The number
of such congruences is $\prod_{i=1}^{t}(q_{i} - 5)$. Hence we
obtain
\begin{eqnarray*}
&&\prod_{i=1}^{t}(q_{i} - 5) = \left(\prod_{i=1}^{t}q_{i}\right)\left(\prod_{i=1}^{t}\frac{1-5/q_{i}}{(1-1/q_{i})^{5}}\right)\left(\prod_{i=1}^{t}(1-\frac{1}{q_{i}})^{5}\right)\\
&\ge& \prod_{p \ge 7}\frac{(1-5/p)}{(1-1/p)^{5}}\cdot a \cdot
\frac{\varphi(a)^{5}}{a^{5}}
\gg \frac{a}{(\log\log a)^{5}}.
\end{eqnarray*}

Since $a\ge \prod_{i=3}^{12}p_i$, it follows that $q_{t} \ge 37$,
and so
\[
\frac{\Big[\frac{q_{t}}{6}\Big] - 5}{q_{t} - 5} \ge
\frac{\Big[\frac{41}{6}\Big] - 5}{41 - 5}=\frac{1}{36},
\]
which completes the proof.
\end{proof}

\begin{lem} Let $A \in C_{2}(n)$, $a \in A$ and $k$ be a positive integer with
$(6k,a) = (6k + 1,a) = (6k + 2,a) = (6k + 3,a) = (6k + 5,a) = 1$, then we have
\[
|\{6k, 6k + 1, 6k + 2, 6k + 3, 6k + 4, 6k + 5\} \cap A| \le 3.
\]
\end{lem}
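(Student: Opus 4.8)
The plan is to argue by contradiction: assume that $\{6k, 6k+1, 6k+2, 6k+3, 6k+4, 6k+5\}$ contains at least four elements of $A$, and then produce three pairwise coprime integers in $A$, contradicting $A \in C_2(n)$. The guiding observation is that, by hypothesis, $a$ is coprime to each of the five integers $6k, 6k+1, 6k+2, 6k+3, 6k+5$; so it suffices to find two of these five that lie in $A$ and are coprime to each other. The element $6k+4$, about which we know nothing relative to $a$, will never be used.

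First I would catalogue the coprimality relations inside $S := \{6k, 6k+1, 6k+2, 6k+3, 6k+5\}$. Consecutive integers are coprime; two distinct odd integers differing by $2$ or $4$ are coprime; and $(6k+2, 6k+5) = (6k+2, 3) = 1$. Running through the ten pairs, one finds that the only pairs in $S$ that can fail to be coprime are the three pairs containing $6k$, namely $\{6k, 6k+2\}$, $\{6k, 6k+3\}$ and $\{6k, 6k+5\}$; every pair of elements of $S$ not containing $6k$ is automatically coprime.

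Now, since $6k+4 \notin S$, the contradiction hypothesis gives $|S \cap A| \ge 3$. Pick any three elements of $S \cap A$. Among the three pairs they form, at most two (those through $6k$) can be non-coprime, so there is a coprime pair $\{x, y\} \subseteq S \cap A$. Because $x, y \in \{6k, 6k+1, 6k+2, 6k+3, 6k+5\}$, the hypothesis yields $(a, x) = (a, y) = 1$; moreover $a \ne x$ and $a \ne y$, since $a = x$ would force $1 = (a,x) = (x,x) = x$, impossible as $x \ge 6k \ge 6$ (and likewise for $y$). Hence $a, x, y$ are three distinct pairwise coprime members of $A$, contradicting $A \in C_2(n)$, which proves the bound $\le 3$.

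I do not expect any real obstacle; the only point demanding attention is the case check showing that, apart from the three pairs through $6k$, all pairs in $S$ are coprime — in particular the slightly non-obvious fact that $6k+2$ and $6k+5$ are always coprime — together with the routine verification that the extracted coprime pair is disjoint from $\{a\}$.
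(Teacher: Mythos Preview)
Your proof is correct and follows essentially the same contradiction strategy as the paper. The paper's version is marginally cleaner: it discards both $6k$ and $6k+4$, so that from $|\{6k+1,6k+2,6k+3,6k+5\}\cap A|\ge 2$ one immediately gets a coprime pair (all six pairs in that four-element set are coprime), avoiding your case analysis of the pairs through $6k$.
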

\begin{proof}
Assume contrary that
\[
|\{6k, 6k + 1, 6k + 2, 6k + 3, 6k + 4, 6k + 5\} \cap A| \ge 4.
\]
Then it is clear that
\[
|\{6k + 1, 6k + 2, 6k + 3, 6k + 5\} \cap A| \ge 2.
\]
Obviously, if we choose two elements from the set $\{6k + 1, 6k +
2, 6k + 3, 6k + 5\} \cap A$, then these elements and $a$ are
pairwise coprime, which is absurd.
\end{proof}

Now we are ready to prove the second case of Theorem 3. Let $a$ be the smallest element of $A$ with $(a, 6) = 1$.
 According to Lemma 5 and Lemma 7 we have
\begin{eqnarray*}
|A|& =& |\{1,2,3,4,5\} \cap A| + \sum_{k = 1}^{[n/6]}|\{6k, 6k + 1, 6k + 2, 6k + 3, 6k + 4, 6k + 5\} \cap A|\\
&\le& 5 + 4\cdot\left[\frac{n}{6}\right] - |\{m: 1 \le m \le n,
6\mid m ~\text{and}~ (m+i,a) = 1 ~\text{for}~i=0,1,2,3,5\}|.
\end{eqnarray*}
Thus it is enough to prove that
\[
|\{m: 1 \le m \le a, 6\mid m ~\text{and}~ (m+i,a) = 1 ~\text{for}~i=0,1,2,3,5\}| \gg \frac{n}{(\log\log n)^{5}}
\]
We distinguish two cases.

Case 1. $a < \prod_{i=3}^{12}p_i$. There exists a $0 \le l \le 5$
such that $6 \mid l\cdot a + 1$. We choose $m = l\cdot a + 1$.
Then we have $(m,a) = (l\cdot a+1,a) = 1$, $(m+1,a) = (l\cdot
a+2,a) = 1$, $(m+2,a) = (l\cdot a+3,a) = 1$, $(m+3,a) = (l\cdot a
+ 4,a)= 1$, $(m+5,a) = (l\cdot a + 6,a)= 1$. It follows that
\begin{eqnarray*}
&&|\{m: 1 \le m \le n, 6\mid m ~\text{and}~ (m+i,a) = 1 ~\text{for}~i=0,1,2,3,5\}|\\
&\ge& \left|\left\{6aj + la + 1: 0 \le j \le
\left[\frac{n}{6a_{0}}\right] - 1\right\}\right| =
\left[\frac{n}{6a_{0}}\right] \gg  \frac{n}{(\log\log n)^{5}}.
\end{eqnarray*}

Case 2. $a \ge  \prod_{i=3}^{12}p_i$. By Lemma \ref{lem66}, we
have
\begin{eqnarray*}
&&|\{m: 1 \le m \le a, 6\mid m ~\text{and}~ (m+i,a) = 1 ~\text{for}~i=0,1,2,3,5\}|\\
&\ge& \left|\bigcup_{0 \le j \le \left[\frac{n-a}{6a}\right]}(6ja + \{m: 1 \le m \le a, 6\mid m ~\text{and}~ (m+i,a) = 1 ~\text{for}~i=0,1,2,3,5\})\right|\\
&\gg& \frac{n}{6a} \cdot \frac{a}{(\log\log a)^{5}} \gg \frac{n}{(\log\log n)^{5}}.
\end{eqnarray*}
The proof of Theorem 3 is completed.

\section{Proof of Theorem 4}

For any integer $n\ge \prod_{i=1}^{k+2}p_i$, there exists an
integer $l\ge k+1$ such that $$\prod_{i=1}^{l+1}p_{i} \le n <
\prod_{i=1}^{l+2}p_{i}.$$ We define the set \[ A = \{m: m \le n,
\text{there exists}~ p_{i},p_j~\text{with}~i \le k< j \le
l~\text{such that}~p_ip_{j} \mid m \} \cup \{p_{k+1}p_{k+2}\cdots
p_{l}\}.
\]
Obviously, $A \in C_{k}(n)$ and $A \not\subseteq E_k(n)$. It is
easy to see that
\[
|A \cap [1, p_{1}p_2\cdots p_{l}]| = \Big(p_{1}p_2\cdots p_{k} -
\varphi(p_{1}p_2\cdots  p_{k})\Big) \cdot
\Big(p_{k+1}p_{k+2}\cdots  p_{l} - \varphi(p_{k+1}p_{k+2}\cdots
p_{l})\Big) + 1.
\]
It follows that
\begin{eqnarray*}
|A| &\ge& \left|A \cap \left[1, \left[\frac{n}{p_{1}p_2\cdots  p_{l}}\right]p_{1}p_2\cdots p_{l}\right]\right|\\
&\ge& \left[\frac{n}{p_{1}p_2\cdots  p_{l}}\right]
\left(p_{1}p_2\cdots p_{k} - \varphi(p_{1}p_2\cdots  p_{k})\right)
\cdot \left(p_{k+1}p_{k+2}\cdots  p_{l} -
\varphi(p_{k+1}p_{k+2}\cdots p_{l})\right).
\end{eqnarray*}
We give an upper estimation to the fractional part of
 $\frac{n}{p_{1}p_2\cdots p_{l}}$. It is easy to see from the definition of $l$ that $p_{l+1} \sim \log n$ and
 therefore
\[
\Big\{\frac{n}{p_{1}p_2\cdots  p_{l}}\Big\} \le 1 \le c \cdot
\frac{p_{l+1}}{\log n}  \le c \cdot \frac{n}{p_{1}p_2\cdots p_{l}}
\cdot \frac{1}{\log n},
\]
where $c$ is an absolute positive constant. Thus we have
\[
\Big[\frac{n}{p_{1}p_2\cdots  p_{l}}\Big] =
\frac{n}{p_{1}p_2\cdots  p_{l}} - \Big\{\frac{n}{p_{1}p_2\cdots
 p_{l}}\Big\} \ge \frac{n}{p_{1}p_2\cdots
p_{l}}\Big(1 - \frac{c}{\log n}\Big).
\]
Clearly,
\[
\varphi(p_{k+1}p_{k+2}\cdots p_{l}) = \frac{p_{1}p_2\cdots
p_{k}}{\varphi(p_{1}p_2\cdots  p_{k})}\cdot
\frac{\varphi(p_{1}p_2\cdots  p_{l})}{p_{1}p_2\cdots p_{l}}\cdot
p_{k+1}p_{k+2}\cdots  p_{l} < c_{k}\frac{p_{k+1}p_{k+2}\cdots
 p_{l}}{\log\log n}.
\]

It follows that
\begin{eqnarray*}
|A| &\ge& \frac{n}{p_{1}p_2\cdots p_{l}}\Big(1 - \frac{c}{\log
n}\Big)\cdot \Big(p_{1}p_2\cdots p_{k} - \varphi(p_{1}p_2\cdots
p_{k})\Big)
\cdot p_{k+1}p_{k+2}\cdots p_{l}\Big(1 - \frac{c_{k}}{\log\log n}\Big)\\
&\ge& \frac{n}{p_{1}p_2\cdots p_{k}}\Big(p_{1}p_2\cdots  p_{k} -
\varphi(p_{1}p_2\cdots p_{k})\Big) - \frac{2c_{k}n}{\log\log n},
\end{eqnarray*}
where $c_{k}$ is an absolute positive constant depending only on
$k$. Obviously,
\[
|E_k(n)| = \frac{n}{p_{1}p_2\cdots p_{k}}\Big(p_{1}p_2\cdots p_{k}
- \varphi(p_{1}p_2\cdots  p_{k})\Big) + O_{k}(1).
\]
Hence
\[
|E_k(n)|-|A|\ll_k \frac{n}{\log\log n}.
\]

This completes the proof of Theorem 4.

\bigskip

\section{Acknowledgement} The authors would like to thank J\'anos
Pintz for the valuable discussions about Theorem 1. This work was done during
the third author visiting to Budapest University of Technology and Economics.
He would like to thank Dr. S\'andor Kiss and Dr. Csaba S\'{a}ndor for their
warm hospitality.

\bigskip
\bigskip

\end{document}